\newtheorem{theorem}{\bf Theorem}[section]
\newtheorem{lemma}[theorem]{Lemma}
\newtheorem{pro}[theorem]{Propositon}
\newtheorem{Def}[theorem]{Definition}
\newtheorem{corollary}[theorem]{Corollary}
\begin{document}
\begin{spacing}{1.1}
\title{Nodal domain theorems of signed hypergraphs}
\author{Lei Zhang$^{a,b,c,d}$, Yaoping Hou$^{a}$\thanks {Email addresses: shuxuezhanglei@163.com, yphou@hunnu.edu.cn.}\\
\small $^a$Department of Mathematics, Hunan Normal University, Changsha, China\\
\small $^b$Department of Mathematics and Statistics, Qinghai Normal University,\\ \small Xining, China\\
\small $^c$Academy of Plateau, Science and Sustainability, Xining, China\\
\small $^d$The State Key Laboratory of Tibetan Information Processing and Application, \\
\small Xining, China\\}
\date{}
\maketitle
\begin{abstract}
An signed hypergraph is a hypergraph where each vertex-edge incidence is given a label of $+1$ or $-1$. In 2019, Jost and Mulas generalized the normalized combinatorial Laplace operator of graphs to signed hypergraphs. In this paper, we establish nodal domain theorems for the normalized combinatorial Laplace operator in signed hypergraphs. We also obtain a lower bound estimates for the number of strong nodal domains.
\end{abstract}

{\bf AMS}\,: 05C50; 05C65; 05C22

{\bf Keywords}\,: Signed hypergraphs, Nodal domain, Normalized Laplace operator

\section{Introduction}

\noindent

 Courant's nodal domain theorem which was proved by Richard Courant \cite{Courant1953} in 1920s, is a basic result in spectral theory with wide applications. The theorem states that the nodal lines of the $k$-th eigenfunction $f_k$ of a self-adjoint second order elliptic differential operator can not divide the domain $D$ into more than $k$ different subdomains. Here the nodal lines are refered to the set of zeros (nodes) of eigenfunctions and the subdomains are now known as the nodal domains. Courant's theorem can be considered as a natural generalization of Sturm's oscillation theorem for second order ODEs that the zeros of the $k$-th eigenfunction of a vibrating string divide the string into exactly $k$ subintervals. There are abundant extensions of Courant's theorem to non-linear operators like $p$-Laplacians,  to Riemannian manifolds and discrete settings, see, e.g., \cite{Chang2017, Chen1976, Jost2021}.

The study of discrete nodal domain theorems on graphs dates back to the work of Gantmacher and Krein \cite{Gantmacher2002}, which contains a discrete analogue of Sturm's theorem for strings. Many of Fiedler's results in 1970s \cite{Fiedler1973, Fiedler1975, Fiedler19751} can be interpreted as discrete nodal domain estimates. The discrete nodal domain theorems for generalized Laplacians were established by Davies, Gladwell, Leydold, and Stadler \cite{Davis2001} in 2001. There are many further advances in this topic, see, e.g., \cite{Bykolu2003, Bykolu2005, Lin2010} and the book \cite{Bykolu2007}.

In the last decade, we have witnessed a revolution in hypergraph theory when different tensors or hypermatrices associated with hypergraphs are studied extensively in \cite{Chung1997, Banerjee2017}. Despite promising progress, some aspects of spectral graph theory cannot be generalised to spectral hypergraph theory using tensors. Most tensor-related problems are NP-hard, as shown in \cite{Hillar2013}. The alternative method for studying a hypergraph is to observe different properties of a hypergraph in terms of the spectra of different connectivity matrices associated with the underlying weighted graph of the hypergraph, see \cite{Rodriguez2003, Rodriguez2009, Bretto2013, Banerjee2021}. In 2019, Jost and Mulas \cite{Jost2019} introduced chemical hypergraphs with the aim of modelling chemical reaction networks. They also introduced two normalized Laplace operators for chemical hypergraphs, the vertex Laplacian $L$ and the hyperedge Laplacian $L^H$, as a generalization of the classical theory introduced by Chung for graphs. We have noticed that $L$ is not necessarily a symmetric matrix, but it is a symmetric operator with respect to the scalar product that we use. Similarly, Reff and Rusnak \cite{Reff2012} introduced signed hypergraphs, that is, a hypergraph where each vertex-edge incidence is given a label of $+1$ or $-1$. Shi \cite{Shi1992} called this type of hypergraph a signed hypergraph and used it to model the constrained via minimization (CVM) problem or two-layer routings.

In 2021, Mulas and Zhang \cite{Mulas2021} shown some new spectral properties of the normalized Laplacian defined for signed hypergraphs, and obtained the signless nodal domain theorem for the normalized Laplacian of the signed hypergraphs. Recently, Ge and Liu \cite{Ge2022} establish nodal domain theorems for arbitrary symmetric matrices. It is natural to ask for the sign version of nodal domain theorems of the signed hypergraphs. In this paper, we solve this problem by introducing proper concepts of strong and weak nodal domains to respect the non-uniform signs of off-diagonal entries of the normalized Laplacian operator for the signed hypergraphs and establishing the corresponding nodal domain theorems.

The contents of this paper are organized as follows. Section 2 provides an overview of the preliminaries needed in order to discuss the main results. The next two sections focus on the normalized Laplace operator. In particular, in Section 3 we give the definitions of the strong and weak nodal domains for the signed hypergraphs and some properties are discussed. In Section 4 we prove the Courant nodal domain theorem for the normalized Laplace operator in the signed hypergraphs. Finally, we show a lower bound estimates for the number of strong nodal domains in Section 5.

\section{Preliminary}

\noindent

A hypergraph $H$ is a pair $(V,E)$, where $E\subseteq 2^{V}$ and $2^{V}$ stands for the set of all subsets of $V$. The elements of $V=V(H)$, labeled as $[n]=\{1,\cdots,n\}$, are referred to as vertices and the elements of $E=E(H)$ are called edges. The corank $cr(H)$ and rank $rk(H)$ of a hypergraph $H=(V, E)$, is defined by $cr(H)=min_{e\in E}|e|$, and $rk(H)=max_{e\in E}|e|$. A hypergraph $H$ is called $k$-uniform hypergraph if $cr(H)=rk(H)=k$. We say two vertices $x, y\in V$ are connected by an edge $e$ if $x, y\in e \in E$. We denote by $deg(x)$ the vertex degree of $x\in V$.

A hypergraph $H$ is called a linear hypergraph(see \cite{Bretto2013}), if each pair of the edges of $H$ has at most one common vertex. In a hypergraph, two vertices $x, y$ are said to be adjacent if there is an edge $e$ that contains both of these vertices and write $x\sim y$. Two edges are said to be adjacent if their intersection is not empty. A vertex $v$ is said to be incident to an edge $e$ if $v\in e$. A path of length $q$ in a hypergraph $H$ is defined to be an alternating sequence of vertices and edges $v_1,e_1,v_2,e_2,\cdots,v_q,e_q,v_{q+1}$ such that

(1) $v_1,\cdots,v_{q+1}$ are all distinct vertices of $H$,

(2) $e_1,\cdots,e_q$ are all distinct edges of $H$,

(3) $v_r,v_{r+1}\in e_r$ for $r=1,\cdots,q$.

If $q\geq 1$ and $v_1=v_{q+1}$, then this path is called a cycle of length $q$. A hypergraph $H$ is connected if there exists a path starting at $v$ and terminating at $u$ for all $v,u\in V$, and is called acyclic if it contains no cycle. The other undefined definitions here can refer to \cite{Berge1973} and \cite{Bretto2013}.

The following definition on induced subhypergraph can be found in \cite{Berge1973}.
\begin{Def}[\cite{Berge1973}]\label{2.3}

Let $H=(V, E)$ be a hypergraph with $E=\{e_1, e_2, \cdots, e_m\}$. For a set $A\subset V$ we call the familily
\begin{equation*}
  E_{A}=\{ e_j\cap A : 1\leq j\leq m, e_j\cap A \neq \emptyset \}
\end{equation*}
the subhypergraph induced by the set $A$.
\end{Def}

We present an overview of the basic definitions regarding signed hypergraphs.

Let $V$ and $E$ be disjoint finite sets whose respective elements are called vertices and edges. An incidence function is a function $\iota : V\times E\rightarrow \mathbb{Z}_{\geq 0}$, while a vertex $v$ and an edge $e$ are said to be incident with respect to $\iota$ if $\iota(v, e)\neq 0$. An incidence is a triple $(v, e, k)$, where $v$ and $e$ are incident and $k \in \{1, 2, 3,\cdots , \iota(v, e)\}$. The value of $\iota(v, e)$ is called the multiplicity of the incidence.

\begin{Def}[\cite{Reff2012}]\label{2.1}
Let $\mathcal{I}$ be the set of incidences determined by $\iota$. An vertex-edge incidence is a function $\sigma : \mathcal{I} \rightarrow \{+1, -1\}$. A signed hypergraph $\Gamma=(H, \sigma)$ is a quadruple $(V, E, \mathcal{I}, \sigma(v, e, k))$, and its underlying hypergraph is $H=(V, E, \mathcal{I})$.
\end{Def}

A signed hypergraph is simple if $\iota(v, e) \leq 1$ for all $v$ and $e$, and for convenience we will write $(v, e)$ instead of $(v, e, 1)$ if $H$ is a simple hypergraph. In this paper, we mainly focus the simple signed hypergraph. Assume that $e=\{v_1, v_2,\cdots , v_l\}$ is an edge of a signed hypergraph $\Gamma=(H, \sigma)$. The sign of the edge $e$ is defined as
\begin{equation*}
  sgn(e)=(-1)^{l-1} \prod_{i=1}^{l} \sigma(v_i, e, k_i).
\end{equation*}

\begin{Def}[\cite{Reff2012}]\label{2.2}
The signed hypergraph $\Gamma=(H, \sigma)$ with the underlying hypergraph $H=(V, E)$ has $t$ connected components if there exist $\Gamma_1=(V_1, H_1), \cdots, \Gamma_t=(V_t, H_t)$ such that:

1. For every $i\in {1, \cdots, t}$, $\Gamma_i$ is a connected hypergraph with $V_i\subseteq V$ and $H_i\subseteq H$;

2. For every $i\in {1, \cdots, t}$, $i\neq j$, $V_i\cap V_j =\emptyset$ and therefore also $H_i\cap H_j =\emptyset$;

3. $\bigcup V_i=V, \bigcup H_i=H$.
\end{Def}

Next we provide an overview of the operators associated to signed hypergraphs.

\begin{Def}[\cite{Reff2012}]\label{2.4}
The adjacency matrix $A=[a_{ij}]$ of a simple signed hypergraph $\Gamma=(H, \sigma)$ is defined by
\begin{equation*}
  a_{ij}=\sum_{e\in E} sgn_e(v_i, v_j),
\end{equation*}
where $sgn_e(v_i, v_j)$ represents the sign of the edge $e$ which contains $v_i$ and $v_j$.
\end{Def}

\begin{Def}[\cite{Reff2012}]\label{2.5}
The $n\times n$ diagonal degree matrix $D : =D(\Gamma)$ is defined by
\[
D_{ij} : =\left\{
      \begin{array}{ll}
      deg(i), \ if \ \ i=j;\\
      0, \ \ \ \ \ \ otherwise.
   \end {array}
   \right.
\]
\end{Def}

A consequence of studying signed hypergraphs is that (hyper)graphs and signed graphs can be viewed as specializations. A graph can be thought of as a signed hypergraph where each edge is contained in two incidences, and exactly one incidence of each edge is signed $+1$. A signed graph can be thought of as a signed hypergraph where each edge is contained in two incidences.

A signed hypergraph can be also viewed as a hypergraph $H=(V, E)$ such that $V$ is a finite set of vertices and $E$ is a set such that every element $h$ in $E$ is a pair of disjoint elements $(h_{+}, h_{-})$ in $2^{V}$, where $h_{+}$ represents the vertex set of vertex-edge incidence $\sigma(v, e, k)=+1$, and $h_{-}$ represents the vertex set of vertex-edge incidence $\sigma(v, e, k)=-1$, respectively. We easily see $h$ as $h_{+}\cup h_{-}$. From this point of view, signed hypergraph is similar with the chemical hypergraphs in \cite{Jost2019} and the oriented hypergraph in \cite{Reff2012}.

\begin{Def}[\cite{Jost2019}]\label{2.6}
Let $C(V)$ be the space of functions $f : V\rightarrow \mathbb{R}$, endowed with the scalar product
\begin{equation*}
  <f, g> : =\sum_{i\in V} deg(i)f(i)g(i).
\end{equation*}
The normalized Laplacian associated to $\Gamma=(H, \sigma)$ is the operator $L : C(V)\rightarrow C(V)$ such that, given $f : V\rightarrow \mathbb{R}$ and given $i\in V$,
\begin{eqnarray*}
  Lf(i) &=& \frac{\sum_{h:i \in h_{+}} (\sum_{i^{\prime} \in h_{+} \ of \ h}f(i^{\prime})-\sum_{j^{\prime} \in h_{-} \ of \ h}f(j^{\prime}))}{deg(i)} \\
  &&  -\frac{\sum_{\hat{h}:i \in h_{-}} (\sum_{\hat{i} \in h_{+} \ of \ \hat{h}}f(\hat{i})-\sum_{\hat{j} \in h_{-} \ of \ \hat{h}}f(\hat{j}))}{deg(i)}.
\end{eqnarray*}
\end{Def}

{\bf Remark 2.13.} Note that, as well as the graph normalized Laplacian, $L$ can be rewritten in a matrix form as
\[L=I-D^{-1}A,\]
where $I$ is the $n\times n$ identity matrix. To see this, observe that, given $f : V\rightarrow \mathbb{R}$ and $i\in V$,
\begin{eqnarray*}
  Lf(i) &=& \frac{\sum_{h:i \in h_{+}} (\sum_{i^{\prime} \in h_{+} \ of \ h}f(i^{\prime})-\sum_{j^{\prime} \in h_{-} \ of \ h}f(j^{\prime}))}{deg(i)} \\
  &&  -\frac{\sum_{\hat{h}:i \in h_{-}} (\sum_{\hat{i} \in h_{+} \ of \ \hat{h}}f(\hat{i})-\sum_{\hat{j} \in h_{-} \ of \ \hat{h}}f(\hat{j}))}{deg(i)}\\
  &=& \frac{deg(i)f(i)- \sum_{i, j \in e\in E} A_{ij}f(j)} {deg(i)}\\
  &=& f(i)-\frac{1}{deg(i)} \sum_{i, j \in e\in E} A_{ij}f(j)
\end{eqnarray*}

Let $L$ be the normalized Laplacian associated to signed hypergraph $\Gamma=(H, \sigma)$. We list its eigenvalues with multiplicity as follows:
\begin{equation*}
  \lambda_1\leq \lambda_2\leq \cdots \leq \lambda_n.
\end{equation*}
Recall the following Courant-Fischer mini-max principle.

\begin{theorem}[\cite{Horn2012}]\label{2.9}
Let $V$ be an $n$-dimensional vector space with a positive definite scalar product $<., .>$. Let $V_k$ be the family of all $k$-dimensional subspaces of $V$. Let $A: V\rightarrow V$ be a self adjoint linear operator. Then the eigenvalues $\lambda_1\geq \cdots \geq \lambda_n$ of $A$ can be obtained by
\begin{equation*}
  \lambda_k=\min\limits_{V_{n-k+1}\in \mathcal{V}_{n-k+1}} \max\limits_{g(\neq 0)V_{n-k+1}} \frac{<Ag, g>}{<g, g>}=\max\limits_{V_{k}\in \mathcal{V}_{k}} \min\limits_{g(\neq 0)V_{k}} \frac{<Ag, g>}{<g, g>}
\end{equation*}
The vectors $g_k$ realizing such a min-max or max-min then are corresponding eigenvectors, and the min-max spaces $\mathcal{V}_{n-k+1}$ are spanned by the eigenvectors for the eigenvalues $\lambda_{n-k+1}, \cdots, \lambda_n$, and analogously, the max-min spaces $\mathcal{V}_{k}$ are spanned by the eigenvectors for the eigenvalues $\lambda_1, \cdots, \lambda_{n-k+1}$. Thus, we also have
\begin{equation}\label{}
  \lambda_k=\min\limits_{g\in V, <g, g_j>=0 \ for \ j=k+1, \cdots, n} \frac{<Ag, g>}{<g, g>}=\max\limits_{g\in V, <g, g_l>=0 \ for \ l=1, \cdots, k-1} \frac{<Ag, g>}{<g, g>}
\end{equation}
In particular,
\begin{equation*}
  \lambda_1= \max\limits_{g\in V} \frac{<Ag, g>}{<g, g>}, \lambda_n=\min\limits_{g\in V} \frac{<Ag, g>}{<g, g>}
\end{equation*}
{\bf Remark.} $\frac{<Ag, g>}{<g, g>}$ is called the Rayleigh quotient of $g$.
\end{theorem}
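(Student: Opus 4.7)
The plan is to reduce the variational characterization to a weighted-average computation in an eigenbasis, and then use a dimension-counting intersection argument to pin down the optimal subspaces. First I would invoke the spectral theorem for self-adjoint operators on a finite-dimensional positive-definite inner-product space: this yields an orthonormal basis $g_1,\dots,g_n$ of eigenvectors with $Ag_j=\lambda_j g_j$, ordered $\lambda_1\geq\lambda_2\geq\cdots\geq\lambda_n$. Writing an arbitrary $g=\sum_j c_j g_j$, I would compute the Rayleigh quotient in closed form,
\[
\frac{\langle Ag,g\rangle}{\langle g,g\rangle}=\frac{\sum_j \lambda_j c_j^2}{\sum_j c_j^2},
\]
so that $R(g)$ is a convex combination of $\lambda_1,\dots,\lambda_n$, and in particular always lies in $[\lambda_n,\lambda_1]$.

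Next I would establish the max-min formula by proving both inequalities. For the \emph{lower} bound I would take the explicit test subspace $V_k=\mathrm{span}(g_1,\dots,g_k)$; for any nonzero $g\in V_k$ only the coefficients $c_1,\dots,c_k$ are nonzero, so the closed-form above forces $R(g)\geq\lambda_k$, and equality is attained at $g=g_k$. For the \emph{upper} bound I would invoke a dimension count: given any $k$-dimensional subspace $W\subseteq V$, the spaces $W$ and $\mathrm{span}(g_k,\dots,g_n)$ have dimensions summing to $k+(n-k+1)=n+1$, so their intersection contains a nonzero vector $g$, and the closed form then gives $R(g)\leq\lambda_k$. Hence $\min_{0\neq g\in W}R(g)\leq\lambda_k$, and taking the supremum over $W$ matches the lower bound.

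The min-max characterization follows by the same argument with the roles of $\mathrm{span}(g_1,\dots,g_k)$ and $\mathrm{span}(g_k,\dots,g_n)$ interchanged, or equivalently by applying the max-min formula to $-A$ (noting that the eigenvalues of $-A$ are $-\lambda_n\geq\cdots\geq-\lambda_1$). The two final equalities in \eqref{} come from the observation that the orthogonality constraints $\langle g,g_j\rangle=0$ for $j=k+1,\dots,n$ are exactly the condition $g\in\mathrm{span}(g_1,\dots,g_k)$, so the constrained minimum is attained on this extremal subspace and equals $\lambda_k$ by the first paragraph; symmetrically for the dual constraints. Along the way one reads off that the min-max is realized at $g_k$ and that the extremal spaces are spanned by the indicated eigenvectors.

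The main obstacle, such as it is, is bookkeeping for repeated eigenvalues: when a $\lambda_k$ has multiplicity greater than one the extremal subspace $V_k$ is not unique. The argument nonetheless goes through unchanged because the Rayleigh quotient is constant on each eigenspace, the dimension-count lemma is insensitive to multiplicities, and the identification of $g_k$ as ``the'' extremizer should be read as any unit eigenvector in the corresponding eigenspace.
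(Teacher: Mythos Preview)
Your proof is correct and follows the standard textbook argument. The paper itself does not supply a proof of this theorem: it is stated as Theorem~\ref{2.9} with the citation \cite{Horn2012} (Horn and Johnson, \emph{Matrix Analysis}) and is used as a black box, so there is no ``paper's own proof'' to compare against.
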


\section{Strong and weak nodal domains on signed hypergraphs}

\subsection{Definition of strong and weak nodal domains}

\noindent

In this section, we introduce strong and weak nodal domain paths (see Definitions \ref{3.1} and \ref{3.2} below) which further induce two kinds of equivalent relations on the support set of a function $f$. Building upon the corresponding equivalent classes, we define strong and weak nodal domains of a function $f$ (Definition \ref{3.4}).
\begin{Def}(Strong nodal domain path). \label{3.1}
Let $\Gamma=(H, \sigma)$ be a signed hypergraph where $H=(V, E)$ and $f: V\rightarrow \mathbb{R}$ be a function. A sequence $\{x_j\}^{\ell}_{j=1}$ of vertices in  $H$ is called a strong nodal domain path of $f$(an $S$-path for short) if $x_j, x_{j+1}\in e \in E$ and $f(x_j)sgn(e)f(x_{j+1}) > 0$ for each $j=1,2, \cdots, \ell$.
\end{Def}

\begin{Def}(Weak nodal domain path). \label{3.2}
Let $\Gamma=(H, \sigma)$ be a signed hypergraph where $H=(V, E)$ and $f: V\rightarrow \mathbb{R}$ be a function.  A sequence $\{x_j\}^{\ell}_{j=1}$ of vertices in  $H$  is called a weak nodal domain path of $f$(an $W$-path for short), that is, for any two consecutive non-zeros $x_i$ and $x_j$ of $f$, i.e., $f(x_i)\neq 0, f(x_j)\neq 0$, and $f(x_k)=0$ for $i<k<j$ and if $x_i$ and $x_j$ are connected by edges $e_1, e_2, \cdots, e_r$ and $f(x_i)sgn(e_1)sgn(e_2)\cdots sgn(e_r)f(x_j)>0$.



\end{Def}

We remark that every edge containing at most 1 non-zeros of $f$ is a $W$-path.

Using the above two types of paths, we introduce the following two equivalence relations on the support set of a function $f$.

\begin{Def} \label{3.3}
Let $\Gamma=(H, \sigma)$ be a signed hypergraph where $H=(V, E)$ and $f: V\rightarrow \mathbb{R}$ be a function. Let $\Omega=\{v\in V: f(v)\neq 0\}$ be the support set of $f$.

(i) We define a relation $R_S$ on $\Omega$ as follows: For any $x, y\in \Omega$, $(x, y)\in R_S$ if and only if $x=y$ or there exists an $S$-path connecting $x$ and $y$.

(ii) We define a relation $R_W$ on $\Omega$ as follows: For any $x, y\in \Omega$, $(x, y)\in R_W$ if and only if $x=y$ or there exists an $W$-path connecting $x$ and $y$.
\end{Def}

It is direct to check that both $R_S$ and $R_W$ are equivalence relations.

\begin{Def} (Strong and weak nodal domain). \label{3.4}
Let $\Gamma=(H, \sigma)$ be a signed hypergraph where $H=(V, E)$ and $f: V\rightarrow \mathbb{R}$ be a function. Let $\Omega=\{v\in V: f(v)\neq 0\}$ be the support set of $f$.

(i) We denote by $\{S_i\}^p_{i=1}$ the equivalence classes of the relation $R_S$ on $\Omega$. We call the induced subhypergraph of each $S_i$ a strong nodal domain of the function $f$. We denote the number $p$ of strong nodal domains of $f$ by $\mathfrak{S}(f)$.

(ii) We denote by $\{W_i\}^q_{i=1}$ the equivalence classes of the relation $R_W$ on $\Omega$. We call the induced subhypergraph of each set

$W_i^0: = W_i\cup \{x\in V:$ there exists a $W$-path from $x$ to some vertex in $W_i\}$\\
a weak nodal domain of the function $f$. We denote the number $q$ of weak nodal domains of $f$ by $\mathfrak{W}(f)$.
\end{Def}

Notice that $W_i^0$ is obtained from $W_i$ by absorbing the zeros around it.

Next, we illustrate above definitions  by an example.

{\bf Example 1} We consider the signed hypergraph $\Gamma=(H, \sigma)$ given in Figure 1 and the normalized Laplacian operator
\[
L=\left(
  \begin{array}{ccccccccc}
  1 & -\frac{1}{3} & -\frac{1}{3} & 0 & -\frac{1}{3} & -\frac{1}{3} & \frac{1}{3} & 0 & 0 \\
  -1 & 1 & -1 & 0 & 0 & 0 & 0 & 0 & 0 \\
  -\frac{1}{3} & -\frac{1}{3} & 1 & -\frac{1}{3} & -\frac{1}{3} & 0 & 0 & 0 & \frac{1}{3} \\
  0 & 0 & -1 & 1 & -1 & 0 & 0 & 0 & 0 \\
  0 & 0 & 0 & -\frac{1}{3} & 1 & -\frac{1}{3} & 0 & \frac{1}{3} & 0 \\
  -1 & 0 & 0 & 0 & -1 & 1 & 0 & 0 & 0 \\
  1 & 0 & 0 & 0 & 0 & 0 & 1 & 0 & 0 \\
  0 & 0 & 0 & 0 & 1 & 0 & 0 & 1 & 0 \\
  0 & 0 & 1 & 0 & 0 & 0 & 0 & 0 & 1 \\
  \end{array}
\right).
\]

By numerical computation, we obtain the eigenvalues of $L$ listed below:
\begin{eqnarray*}
  \lambda_1\approx -0.51 \leq \lambda_2\approx 0.22 \leq \lambda_3\approx 0.33 \leq \lambda_4=\lambda_5=\lambda_6=1 \leq \lambda_7\approx 1.95 \leq \lambda_8=\lambda_9=2
\end{eqnarray*}

\begin{figure}[!hbpt]
\begin{center}
\includegraphics[scale=0.3]{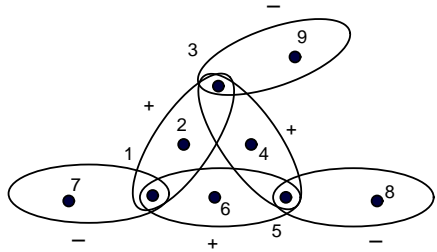}\\
\caption{$\Gamma=(H, \sigma)$}
\end{center}
\end{figure}

The following is a system of corresponding eigenfunctions of $L:$
\begin{eqnarray*}
f_1 &\approx& (-0.38, -0.50, -0.38, -0.38, -0.2, -0.38, 0.25, 0.13, 0.25)^{T}.\\
f_2 &\approx& (-0.22, -0.56, -0.22, 0.19, 0.37, 0.19, 0.28, -0.47, 0.28)^{T}.\\
f_3 &\approx& (0.3, 0, -0.3, -0.45, 0, 0.45, -0.45, 0, 0.45)^{T}.\\
f_4 &\approx& (0, -0.38, 0, 0.47, 0, -0.33, -0.71, 0.14, 0.08)^{T}.\\
f_5 &\approx& (0, -0.63, 0, 0.18, 0, 0.18, -0.44, 0.36, -0.45)^{T}.\\
f_6 &\approx& (0, 0.4, 0, 0.37, 0, -0.3, 0.1, 0.07, 0.77)^{T}.\\
f_7 &\approx& (-0.07, 0.16, -0.07, -0.45, 0.5, -0.45, -0.08, 0.53, -0.08)^{T}.\\
f_8 &\approx& (0.09, 0, -0.09, -0.4, 0.49, -0.58, 0.09, 0.49, -0.09)^{T}.\\
f_9 &\approx& (0.23, 0, -0.23, 0.64, -0.41, 0.18, 0.23, -0.41, -0.23)^{T}.
\end{eqnarray*}

We list the strong and weak nodal domains of each eigenfunction in Table 1. Notice that we only provide vertex subsets. The strong and weak nodal domains are the induced subhypergraphs of those vertex subsets in Table 1.
\begin{table}
\resizebox{\textwidth}{!}{
\begin{tabular}{|c|c|c|}
  \hline
  Eigenfunction & Strong nodal domain & Weak nodal domain \\
  \hline
  $f_1$ & \{1, 2, 3, 4, 5, 6, 7, 8, 9\}  & \{1, 2, 3, 4, 5, 6, 7, 8, 9\} \\
  \hline
  $f_2$ & \{1, 2, 3, 7, 9\}, & \{1, 2, 3, 7, 9\}, \\
        & \{4, 5, 6, 8\} & \{4, 5, 6, 8\} \\
  \hline
  $f_3$ & \{1, 6, 7\}, & \{1, 2, 6, 7\}, \\
        & \{3, 4, 9\} & \{3, 4, 5, 8, 9\} \\
  \hline
  $f_4$ & \{2\}, \{4\}, \{6\}, \{7\}, \{8\}, \{9\}, & \{2, 3, 9\}, \{5, 6, 8\}, \{4\}, \{1, 7\}  \\
  \hline
  $f_5$ & \{2\}, \{4\}, \{6\}, \{7\}, \{8\}, \{9\},  & \{1, 6, 7\}, \{3, 4, 9\}, \{2\}, \{5, 8\} \\
  \hline
  $f_6$ & \{2\}, \{4\}, \{6\}, \{7\}, \{8\}, \{9\}, & \{1, 5, 6, 7, 8\}, \{2, 3, 4\}, \{9\} \\
  \hline
  $f_7$ & \{2\}, \{5\}, \{7\}, \{8\},   & \{2\}, \{5\}, \{7\}, \{8\}, \\
        & \{9\}, \{1, 3, 4, 6\}  & \{9\}, \{1, 3, 4, 6\}  \\
  \hline
  $f_8$ & \{1, 5, 8\}, \{3, 4\}, \{6\},  & \{1, 5, 8\}, \{2, 3, 4\}, \{6\},  \\
        & \{7\}, \{9\}   & \{7\}, \{9\} \\
  \hline
  $f_9$ & \{1, 6\}, \{3, 5\}, \{4\}, & \{1, 6\}, \{2, 3, 5\}, \{4\}, \\
        &  \{7\}, \{8\}, \{9\}  & \{7\}, \{8\}, \{9\}  \\
  \hline
\end{tabular}
}
\caption{Strong and weak nodal domains}
\end{table}
\subsection{Basic properties of weak nodal domains}
We say two domains $D_i$ and $D_j$ are adjacent, denoted by $D_i\sim D_j$, if there exist $x\in D_i$ and $y\in D_j$ such that $x, y\in e\in E$. By definition, we have the following proposition.

\begin{pro}\label{3.7}
Let $\{D_i\}^q_{i=1}$ be the weak nodal domains of a non-zero function $f$ on a signed hypergraph $\Gamma=(H, \sigma)$ where $H=(V, E)$. Let $H_D=(V_D, E_D)$ be the graph defined by
\begin{equation*}
  V_D : =\{D_i\}^q_{i=1}, \ and \ E_D : = \{\{D_i, D_j\} : D_i\sim D_j\}.
\end{equation*}
If the hypergraph $H$ is connected, so does $H_D$.
\end{pro}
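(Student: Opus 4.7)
The plan is to lift a hypergraph path in $H$ between representatives of two weak nodal domains to a walk in $H_D$. The one non-obvious preliminary is that every vertex of $V$ — including the zeros of $f$ — lies in at least one of the sets $W_i^0$. For a vertex $v$ with $f(v)\neq 0$ this is immediate, since $v$ belongs to its own equivalence class. For a vertex $z$ with $f(z)=0$, the connectedness of $H$ together with $f\not\equiv 0$ produces a shortest path in $H$ from $z$ to some vertex $u$ in the support of $f$. By minimality, all intermediate vertices of this path have $f$-value $0$, so the sequence contains a unique non-zero endpoint $u$; the condition in Definition \ref{3.2} on pairs of consecutive non-zeros is then vacuous, and the sequence is a $W$-path. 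Thus $z \in W_{i(u)}^0$, where $W_{i(u)}$ is the weak equivalence class containing $u$.

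With this covering property established, I would take arbitrary weak nodal domains $D_a, D_b \in V_D$ and pick representatives $u \in W_a$ and $v \in W_b$ (these exist since $W_a, W_b$ are equivalence classes on the nonempty support of $f$). Since $H$ is connected, there is a hypergraph path
\[
u = w_0,\ e_0,\ w_1,\ e_1,\ \ldots,\ e_{\ell-1},\ w_\ell = v.
\]
For each index $k$, select some weak nodal domain $D(w_k)$ containing $w_k$ (possible by the preceding paragraph), with $D(w_0) = D_a$ and $D(w_\ell) = D_b$. For every consecutive pair $w_k, w_{k+1}$, both vertices lie in the common edge $e_k$, so by the definition of the adjacency relation $\sim$ on domains we have either $D(w_k) = D(w_{k+1})$ or $D(w_k) \sim D(w_{k+1})$. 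Deleting repetitions from the sequence $D(w_0), D(w_1), \ldots, D(w_\ell)$ therefore yields a walk in $H_D$ from $D_a$ to $D_b$, which proves that $H_D$ is connected.

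The only genuine obstacle in this argument is the covering step: showing that zero vertices are not orphaned outside every $W_i^0$, since Definition \ref{3.4} only describes $W_i^0$ as absorbing zeros that are reachable from $W_i$ by a $W$-path. Once connectedness of $H$ is used to push each zero vertex into some $W_i^0$ via a shortest path to the support, the rest of the proof is a direct and sign-independent translation of hypergraph adjacencies into the adjacency relation on domains; in particular, no information about the sign function $\sigma$, the products defining $S$- or $W$-paths, or the spectral setup is needed.
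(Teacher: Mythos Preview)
Your proof is correct and follows essentially the same strategy as the paper: lift a hypergraph path between representatives of two weak nodal domains to a walk in $H_D$ by observing that consecutive vertices on the path lie in a common edge and hence sit in equal or adjacent domains. Your treatment is in fact a bit more careful than the paper's, since you explicitly verify the covering property (that every vertex, including zeros of $f$, lies in some $W_i^0$) before assigning domains along the path, whereas the paper's iterative ``last vertex in $D$'' argument leaves this point implicit.
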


\begin{proof}
Let $D$ and $D^{\prime}$ be any two weak nodal domains. Choose two vertices $x$ and $x^{\prime}$ such that $x\in D$ and $x^{\prime}\in D^{\prime}$. Since hypergraph $H$ is connected, there exists a path $x=x_0e_1x_1e_2x_2\cdots e_mx_m=x^{\prime}$ connecting $x$ and $x^{\prime}$. Set $i_0 : =max\{i : x_i\in D\}$. Then we have $f(v_i)\neq 0, i=1, 2, \cdots, l-2$ and $f(x_{i_0+1})\neq 0$ for $e_{x_{i_0+1}}=\{x_{i_0}, v_1, \cdots, v_{l-2}, x_{i_0}+1\}$. Therefore, $x_{i_0+1}$ and $x$ belongs to different equivalent classes of the relation $R_W$, i.e., $x_{i_0+1}$ lies in a weak nodal domain $D_1\sim D$. Applying the above argument iteratively, we find a path $D\sim D_1\sim \cdots \sim D^{\prime}$ from $D$ to $D^{\prime}$ in the graph $H_D$. That is, the graph $H_D$ is connected.
\end{proof}

For any zero vertex, we have the following observation.
\begin{pro}\label{3.8}
Let f be a non-zero function on a signed hypergraph $\Gamma=(H, \sigma)$. Then for any three weak nodal domain $D_1, D_2, D_3$ of $f$ we have $D_1\cap D_2\cap D_3=\emptyset$.
\end{pro}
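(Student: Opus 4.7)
I plan to argue by contradiction. Suppose three pairwise distinct weak nodal domains $D_1, D_2, D_3$ of $f$, corresponding to distinct $R_W$-equivalence classes $W_1, W_2, W_3$, share a common vertex $x \in D_1 \cap D_2 \cap D_3$.

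First I would observe that $x$ must satisfy $f(x) = 0$. A non-zero vertex $y$ lies in a unique class $W_i$, and any W-path from $y$ to a vertex of some other class $W_j$ would, by Definition \ref{3.3}(ii), force $y \in W_j$; thus a non-zero vertex belongs to exactly one $W_i^0$. Next, for each $i \in \{1, 2, 3\}$, the membership $x \in W_i^0$ supplies a W-path $Q_i$ from $x$ to some vertex of $W_i$. Applying the sign condition of Definition \ref{3.2} to consecutive non-zero pairs shows that all non-zeros occurring along $Q_i$ are mutually $R_W$-related, so all of them lie in $W_i$. Truncating $Q_i$ at the first non-zero vertex $y_i$ encountered from $x$ therefore yields a W-path $P_i$ from $x$ to $y_i \in W_i$ on which $y_i$ is the only non-zero (its sign condition is vacuous, since only one non-zero appears). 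Let $s_i \in \{+1, -1\}$ denote the product of the signs $\mathrm{sgn}(e)$ over the edges $e$ of $P_i$.

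For each pair $\{i, j\} \subset \{1, 2, 3\}$, concatenating the reverse of $P_i$ with $P_j$ at the common zero vertex $x$ produces a walk $C_{ij}$ from $y_i$ to $y_j$ whose only non-zero vertices are its two endpoints and whose accumulated edge-sign product equals $s_i s_j$. Consequently $C_{ij}$ qualifies as a W-path exactly when
\[
f(y_i)\, s_i s_j\, f(y_j) > 0,
\]
and in that case $(y_i, y_j) \in R_W$ contradicts $W_i \neq W_j$. If instead all three inequalities $f(y_1) s_1 s_2 f(y_2) < 0$, $f(y_1) s_1 s_3 f(y_3) < 0$, $f(y_2) s_2 s_3 f(y_3) < 0$ hold strictly, multiplying them gives
\[
\bigl(f(y_1) f(y_2) f(y_3)\, s_1 s_2 s_3\bigr)^{2} < 0,
\]
which is impossible. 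This contradiction finishes the proof.

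The main obstacle I anticipate is justifying the truncation step: one has to verify that every non-zero vertex on an arbitrary W-path $Q_i$ lies in $W_i$, so that cutting $Q_i$ at the first non-zero still yields a W-path terminating in $W_i$. This follows by iterating Definition \ref{3.2} along the consecutive non-zeros of $Q_i$. Once that reduction is in place, the heart of the argument is a clean pigeonhole-style sign-parity observation: three quantities in $\{\pm 1\}$ cannot all be $-1$ when their triple product is forced to equal a perfect square.
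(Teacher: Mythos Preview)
Your proof is correct and follows the same approach as the paper's: assume a common vertex $x$, observe $f(x)=0$, extract a non-zero representative in each $W_i$ together with an accumulated edge-sign, and derive the contradiction that three real numbers cannot have all pairwise products negative. You are in fact slightly more careful than the paper, which tacitly assumes that $x$ is joined to a non-zero vertex of $D_i$ by a \emph{single} edge $e_i$; your truncation of the $W$-path $Q_i$ at its first non-zero vertex correctly handles the general situation where the $W$-path from $x$ into $W_i$ may traverse several zero vertices before reaching a non-zero.
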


\begin{proof}
Suppose that $D_1\cap D_2\cap D_3\neq \emptyset$. Let $x\in D_1\cap D_2\cap D_3$. Then we have $f(x)=0$. By definition, we can find for each $i\in \{1, 2, 3\}$, there is an edge $e_i$ such that $x_0^i, x\in e_i$ and $f(x_0^i)\neq 0$ in $D_i$. Set
\begin{equation*}
  a_i := f(x^i_0)sgn(e_i)\in \mathbb{R}, i = 1, 2, 3.
\end{equation*}
Since $D_1, D_2, D_3$ are different from each other, we obtain
\begin{eqnarray*}
  f(x^1_0)sgn(e_1)sgn(e_2)f(x^2_0)&<& 0 \\
  f(x^1_0)sgn(e_1)sgn(e_3)f(x^3_0)&<& 0 \\
  f(x^2_0)sgn(e_2)sgn(e_3)f(x^3_0)&<& 0.
\end{eqnarray*}
That is
\begin{equation*}
  a_1a_2<0, a_1a_3<0, a_2a_3<0,
\end{equation*}
which is a contradiction.
\end{proof}

\begin{corollary}\label{3.9}
Let $f$ be a non-zero function on a signed hypergraph $\Gamma=(H, \sigma)$. Let $x$ be a vertex lying in two weak nodal domains $D$ and $D^{\prime}$. Then the set
\begin{equation*}
  B(x) : = \{y\in V : \ there \ exist \ an \ edge \ such \ that \ \{x, y\}\subseteq e_i \ and \ f(x)=0.\}
\end{equation*}
contains in $D\cup D^{\prime}$. In particular, we have $S_1(x)\subset D\cup D^{\prime}$ where $S_1(x)=\{y\in V : y\sim x\}$.
\end{corollary}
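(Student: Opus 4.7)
The plan is to reduce the statement to showing that every vertex $y\in S_1(x)$ lies in $D\cup D'$, using that $B(x)\subseteq S_1(x)$ once one knows $f(x)=0$. I would begin by observing that $f(x)$ must vanish: if $f(x)\neq 0$, then $x\in\Omega$ belongs to a unique $R_W$-equivalence class $W_j$ (any $W$-path from $x$ to a vertex of a different class would merge the two classes), so $x$ would lie in the single weak nodal domain $W_j^0$, contradicting the hypothesis. Moreover, Proposition \ref{3.8} rules out $x$ lying in any third weak nodal domain, so the task reduces to proving that for every $y\sim x$ we have $y\in D$ or $y\in D'$.

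I would then split on whether $f(y)=0$. If $f(y)\neq 0$, let $W_j$ be the $R_W$-class containing $y$. The length-two sequence $(y,x)$ has only one non-zero entry and its consecutive vertices share the edge that witnesses $y\sim x$, so the sign-product condition of Definition \ref{3.2} is vacuous and $(y,x)$ qualifies as a $W$-path. Hence $x\in W_j^0$, and because $x$ belongs to exactly the two weak nodal domains $D$ and $D'$, we must have $W_j^0\in\{D,D'\}$; consequently $y\in W_j\subseteq D\cup D'$.

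For the remaining case $f(y)=0$, I would use $x\in D=W_{j_1}^0$ together with $f(x)=0$ to extract, from the definition of $W_{j_1}^0$, a $W$-path $P=(x=u_0,u_1,\ldots,u_k)$ with $u_k\in W_{j_1}$. Prepending $y$ via the edge shared with $x$ yields the sequence $(y,u_0,u_1,\ldots,u_k)$. The key observation is that, since both $y$ and $u_0=x$ are zeros of $f$, the list of pairs of consecutive non-zeros of the extended sequence, and the chains of intervening edges between them, are identical to those of $P$; thus every sign-product condition is inherited from $P$, the extended sequence is still a $W$-path, and $y\in W_{j_1}^0=D$. The only delicate point is exactly this inheritance check: one must verify that prepending a zero vertex cannot create a new pair of consecutive non-zeros whose sign condition would have to be tested afresh, nor disturb any existing such pair. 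Once this bookkeeping is in place, $S_1(x)\subseteq D\cup D'$ follows at once, and the stated containment $B(x)\subseteq D\cup D'$ is then immediate.
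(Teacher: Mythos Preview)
Your argument is correct and follows essentially the same route as the paper: the paper also splits on whether $f(y)=0$, invoking Proposition~\ref{3.8} (via contradiction) in the nonzero case and appealing to the definition of $W_i^0$ in the zero case. Your treatment of the case $f(y)=0$ is more explicit than the paper's one-line ``by definition,'' since you actually verify that prepending a zero vertex to a $W$-path preserves the $W$-path property; this is a useful clarification, but the underlying idea is the same.
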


\begin{proof}
Let $y\in B(x)$. If $f(y)=0$, then we have $y\in D\cup D^{\prime}$ by definition. In the case of $f(y)\neq 0$, we suppose that $y$ lies in a weak nodal domain $D$ different from $D$ and $D^{\prime}$. Then the vertex $x\in D\cap D^{\prime} \cap \bar{D}$, which is a contradiction by Proposition \ref{3.8}.
\end{proof}

\section{Strong and weak nodal domains on signed hypergraphs}
In this section, we prove the following nodal domain theorem. The proof is a neat extension of methods from \cite{Davis2001} and \cite{Ge2022}.

\begin{theorem}\label{4.1}
Let $\Gamma=(H, \sigma)$ be a signed hypergraph where $H=(V, E)$, $L$ is the normalized Laplacian operator associated to $\Gamma$ and $\lambda_k$ be its $k$-th eigenvalue. For any eigenfunction $f_k$ corresponding to $\lambda_k$, i.e., $Lf_k=\lambda_kf_k$, we have
\begin{equation*}
  \mathfrak{S}(f_k)\leq k+r-1, \ and \ \mathfrak{W}(f_k)\leq k+c-1,
\end{equation*}
where $r$ is the multiplicity of $\lambda_k$ and $c$ is the number of connected components of $H$. In
particular, when the hypergraph $H$ is connected, we have $\mathfrak{W}(f_k)\leq k$.
\end{theorem}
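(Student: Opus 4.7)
The plan is to argue by contradiction for both inequalities, using the Courant--Fischer min-max principle (Theorem \ref{2.9}): construct a subspace $W$ supported on the nodal domains whose Rayleigh quotient is bounded above by $\lambda_k$, then force a contradiction by intersecting with the span of eigenvectors corresponding to eigenvalues strictly greater than $\lambda_k$. The starting point is an edge-by-edge decomposition of the Rayleigh form: combining Definition \ref{2.6} with the matrix form $L=I-D^{-1}A$ from Remark 2.13, a direct computation gives $\langle Lf,f\rangle=\sum_{e\in E}Q_e(f)$, where each $Q_e$ is a quadratic form in $\{f(v):v\in e\}$ whose coefficients are built from the incidence signs $\sigma(v,e)$ (and hence from $\mathrm{sgn}(e)$). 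This generalizes the graph identity $\sum_{uv\in E}(f(u)-f(v))^2$ and localizes the analysis edge by edge, in the spirit of \cite{Davis2001, Ge2022}.

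For the strong bound, let $p=\mathfrak{S}(f_k)$, let $S_1,\dots,S_p$ be the strong nodal domains, and set $\phi_i=f_k\cdot\mathbf{1}_{S_i}$. These have pairwise disjoint supports, so they are linearly independent and span a subspace $W$ of dimension $p$. The central lemma is that every $h=\sum_i c_i\phi_i\in W$ satisfies
\[
\langle Lh,h\rangle\le\lambda_k\,\langle h,h\rangle.
\]
To prove this, split $\sum_e Q_e(h)$ into edges internal to some $S_i$ (where $Q_e(h)=c_i^2 Q_e(f_k)$) and edges crossing two distinct nodal domains $S_i,S_j$. For each crossing edge, the strong-path positivity $f_k(x)\,\mathrm{sgn}(e)\,f_k(y)>0$ of Definition \ref{3.1} fails, and this sign reversal lets one bound the cross $c_ic_j$ contribution by the corresponding pure $c_i^2$ and $c_j^2$ contributions. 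Summing and using $\langle Lf_k,f_k\rangle=\lambda_k\langle f_k,f_k\rangle$ yields the claim. Now set $V'=\operatorname{span}\{g_{k+r},\dots,g_n\}$, of dimension $n-k-r+1$; if $p\ge k+r$ then $W\cap V'\ne\{0\}$, and any $0\ne h$ in the intersection satisfies $\lambda_{k+r}\langle h,h\rangle\le\langle Lh,h\rangle\le\lambda_k\langle h,h\rangle$, contradicting $\lambda_{k+r}>\lambda_k$ by the definition of the multiplicity $r$.

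For the weak bound, replace $\phi_i$ by $\psi_i=f_k\cdot\mathbf{1}_{W_i^0}$; since $f_k\equiv 0$ on $W_i^0\setminus W_i$, the $\psi_i$ again have disjoint effective supports and span a $q$-dimensional subspace. The Rayleigh-quotient inequality survives because the weak-path sign condition of Definition \ref{3.2} is crafted precisely to control edges that pass through zero vertices absorbed into the $W_i^0$. The correction $c-1$ enters in the dimension-counting step: on each of the $c$ connected components of $H$ there is a one-dimensional family of functions whose signed-edge sums vanish (a sign-hypergraph analog of the constant-per-component kernel of the graph Laplacian), so a $c$-dimensional ``trivial'' subspace can overlap $W$ and reduces the effective dimension available in the Rayleigh comparison by up to $c-1$. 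The same argument then gives $\mathfrak{W}(f_k)\le k+c-1$, and in particular $\mathfrak{W}(f_k)\le k$ when $H$ is connected.

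The principal obstacle is establishing the Rayleigh inequality $\langle Lh,h\rangle\le\lambda_k\langle h,h\rangle$ for a general $h\in W$ rather than for the individual $\phi_i$'s. The difficulty lies in edges whose vertex set meets several nodal domains: one must unpack the explicit form of $Q_e$ using $\mathrm{sgn}(e)=(-1)^{|e|-1}\prod_v\sigma(v,e)$ together with the failure of the strong/weak path condition at domain boundaries, and verify that every cross-term carries the correct sign. For the weak case there is the extra subtlety of edges containing several zero vertices interspersed among nonzeros, where Definition \ref{3.2} (which multiplies together the signs of all intermediate edges before comparing the endpoint values) is precisely what makes the cancellations succeed.
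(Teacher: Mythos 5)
Your treatment of the strong bound is sound and is essentially the paper's argument in different packaging: your central lemma, that every $h$ in the span $W$ of the truncated functions $\phi_i$ satisfies $\langle Lh,h\rangle\le\lambda_k\langle h,h\rangle$, is exactly what Lemma \ref{4.2} and Corollary \ref{4.3} deliver, since any such $h$ has the form $h=cf_k$ with $c$ constant on each strong domain, whence $\langle h,(L-\lambda_k I)h\rangle=\sum_{x,y\in e}\deg(x)(-L_{xy})f_k(x)f_k(y)(c(x)-c(y))^2\le 0$ term by term. Your subspace-intersection step (intersect $W$ with the span of eigenvectors for $\lambda_{k+r},\dots,\lambda_n$) replaces the paper's choice of a single test function $g\perp f_1,\dots,f_{m-1}$, but both routes give $\mathfrak{S}(f_k)\le k+r-1$.

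The weak bound is where your proposal has a genuine gap, in two distinct places. First, your machinery cannot get below $k+r-1$: to prove $\mathfrak{W}(f_k)\le k$ on a connected hypergraph you would have to intersect $W$ with the span of eigenvectors for $\lambda_{k+1},\dots,\lambda_n$, but when $r>1$ these include further copies of $\lambda_k$ itself, so the two Rayleigh inequalities read $\lambda_k\le R(h)\le\lambda_k$ and no contradiction arises. The paper closes exactly this loophole with an equality analysis that your sketch lacks: if $m>k$, the test function $g$ realizes $R(g)=\lambda_k$ and is therefore itself an eigenfunction, and then the discrete unique-continuation result (Lemma \ref{4.4}, whose proof is the technical heart of the section and uses Corollary \ref{3.9}, i.e.\ that a zero vertex lies in at most two weak domains) forces all coefficients $a_i$ to be equal, contradicting $\langle g,f_k\rangle=0$. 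Second, your mechanism for the additive term $c-1$ is wrong: the normalized Laplacian of a signed hypergraph has no ``constant-per-component kernel'' analog --- it is not even positive semidefinite, and in the paper's own Example 1 the smallest eigenvalue is $\approx -0.51$, so the claimed $c$-dimensional family of functions with vanishing signed-edge sums generally does not exist; moreover, even granting such a subspace, ``overlapping $W$'' does not reduce the dimension available in the Rayleigh comparison in any way you have justified. In the paper the constant $c$ enters for an entirely different reason: the connected-case bound $\mathfrak{W}\le k_i$ is applied on each connected component where $f_k\not\equiv 0$, and the componentwise indices are then summed, giving $\mathfrak{W}(f_k)\le\sum_i k_i\le\sum_i(k_i-1)+l\le k+c-1$. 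To repair your proof you must (i) add the rigidity/unique-continuation step for the connected case and (ii) replace the kernel heuristic by this component-decomposition count.
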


We prepare a crucial lemma, which is a reformulation of Duval and Reiner \cite{Duval1999}.

\begin{lemma}\label{4.2}
Let $\Gamma=(H, \sigma)$ be a signed hypergraph where $H=(V, E)$, $L$ is the normalized Laplacian operator associated to $\Gamma$. Then for any two functions $f, g: V\rightarrow \mathbb{R}$ we have
\begin{equation*}
  <fg, L(fg)>=<fg, fLg>+\sum_{x, y\in e\in E} deg(x)(-L_{xy})g(x)g(y)(f(x)-f(y))^2
\end{equation*}
where $x, y\in e\in E$.
\end{lemma}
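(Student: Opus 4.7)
The plan is to establish the identity by direct computation, using the matrix form $L=I-D^{-1}A$ from Remark~2.13 together with the self-adjointness of $L$ with respect to $\langle\cdot,\cdot\rangle$, which is equivalent to the symmetry relation $\deg(x)L_{xy}=\deg(y)L_{yx}$ and follows from $A=A^{\top}$.

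First, I would reduce the identity to a pointwise Leibniz rule for $L$. Writing
\[
\langle fg,L(fg)\rangle-\langle fg,f\cdot Lg\rangle=\sum_{x\in V}\deg(x)\,f(x)g(x)\bigl[L(fg)(x)-f(x)(Lg)(x)\bigr],
\]
and using $L_{xy}=\delta_{xy}-A_{xy}/\deg(x)$, one sees that the diagonal contributions to $L(fg)(x)$ and $f(x)(Lg)(x)$ cancel, leaving
\[
L(fg)(x)-f(x)(Lg)(x)=\sum_{y\in V}L_{xy}\,g(y)\bigl(f(y)-f(x)\bigr).
\]
Hence the left-hand side minus the first term on the right becomes the double sum $\sum_{x,y\in V}\deg(x)L_{xy}\,f(x)g(x)g(y)(f(y)-f(x))$.

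The second step is a standard symmetrization. Swapping the dummy indices $x\leftrightarrow y$ and invoking $\deg(x)L_{xy}=\deg(y)L_{yx}$ rewrites the same double sum as $\sum_{x,y\in V}\deg(x)L_{xy}\,f(y)g(x)g(y)(f(x)-f(y))$. Averaging the two expressions and using the pointwise identity $f(x)(f(y)-f(x))+f(y)(f(x)-f(y))=-(f(x)-f(y))^{2}$ collapses everything to $\tfrac{1}{2}\sum_{x,y\in V}\deg(x)(-L_{xy})g(x)g(y)(f(x)-f(y))^{2}$. Since $L_{xy}=0$ whenever $x$ and $y$ do not share a common edge, and the $x=y$ contribution vanishes, the sum may be restricted to pairs contained in some $e\in E$, matching the stated right-hand side.

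The main obstacle is entirely bookkeeping rather than substance: one must decide whether the notation $\sum_{x,y\in e\in E}$ in the statement refers to ordered or unordered pairs and, correspondingly, where to absorb the factor $\tfrac{1}{2}$ produced by the symmetrization. One should also verify that $\deg(x)L_{xy}=\deg(y)L_{yx}$ holds in the signed-hypergraph setting, which reduces to the symmetry $A_{xy}=A_{yx}$; this is immediate because $\mathrm{sgn}_e(v_i,v_j)$ depends only on the edge $e$ and not on the order of its endpoints. Once these conventions are fixed, every remaining step is a short algebraic identity.
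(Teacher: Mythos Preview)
Your proposal is correct and follows essentially the same route as the paper: both arguments expand $\langle fg,L(fg)\rangle$ via $L=I-D^{-1}A$, split off the $\langle fg,fLg\rangle$ piece by adding and subtracting $A_{xy}f(x)g(y)$, and then symmetrize the remaining double sum using $\deg(x)L_{xy}=\deg(y)L_{yx}$ to produce the factor $(f(x)-f(y))^{2}$. Your write-up is in fact more explicit than the paper's about the symmetrization and the ordered/unordered-pair convention absorbing the $\tfrac{1}{2}$; the paper performs that last step in a single line without comment.
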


\begin{proof}
By a direct calculation, we have
\begin{eqnarray*}
  &&<fg, L(fg)> \\
  &=& \sum_{x\in V} deg(x)f(x)g(x)\big[ -\frac{1}{deg(x)}\sum_{x, y\in e\in E}
  A_{xy}f(y)g(y)+f(x)g(x) \big] \\
  &=& \sum_{x\in V} deg(x)f(x)g(x)\big[ -\frac{1}{deg(x)}\sum_{x, y\in e\in E}
  (A_{xy}f(y)g(y)-A_{xy}f(x)g(y)\\
  &&+A_{xy}f(x)g(y))+f(x)g(x) \big]\\
  &=& \sum_{x\in V} deg(x)f(x)g(x) \sum_{x, y\in e\in E}
  L_{xy}g(y)(f(y)-f(x))\\
  && +\sum_{x\in V} deg(x)f(x)g(x)\big[ -\frac{1}{deg(x)}\sum_{x, y\in e\in E}
  A_{xy}f(x)g(y)+f(x)g(x) \big]\\
  &=& \sum_{x\in V} deg(x)f(x)g(x) \sum_{x, y\in e\in E}L_{xy}g(y)(f(x)-f(y))\\
  && +\sum_{x\in V} deg(x)f^2(x)g(x)\big[ -\frac{1}{deg(x)}\sum_{x, y\in e\in E}
  A_{xy}g(y)+g(x) \big]\\
  &=& \sum_{x\in V} deg(x)f(x)g(x) \sum_{x, y\in e\in E}L_{xy}g(y)(f(x)-f(y))+<fg, fLg>\\
  &=& \sum_{x, y\in e\in E} deg(x)(-L_{xy})g(x)g(y)(f(x)-f(y))^2+<fg, fLg>
\end{eqnarray*}
This completes the proof.
\end{proof}

The following corollary will be crucial for the proof of Theorem \ref{5.6} in Section 5.
\begin{corollary}\label{4.3}
Let $g$ be an eigenfunction of $L$ such that $Lg=\lambda_k g$. Let $D(g)$ be the diagonal matrix with $D(g)_{xx}=g(x)$. Then we have
\begin{equation*}
  <f, D(g)(L-\lambda_k I)D(g)f>=\sum_{x, y\in e\in E} deg(x)(-L_{xy})g(x)g(y)(f(x)-f(y))^2
\end{equation*}
\end{corollary}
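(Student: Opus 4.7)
The plan is to derive Corollary 4.3 directly from Lemma 4.2 together with the eigenvalue equation $Lg=\lambda_k g$, with essentially no new computation. The key observation is that the diagonal operator $D(g)$ acts on a function $f\colon V\to\mathbb{R}$ as pointwise multiplication, so $D(g)f=fg$ viewed as an element of $C(V)$. Moreover, because the scalar product $\langle\cdot,\cdot\rangle$ on $C(V)$ is weighted only by vertex degrees and $D(g)$ is a real diagonal matrix, $D(g)$ is self-adjoint with respect to this scalar product:
\begin{equation*}
\langle D(g)f,h\rangle=\sum_{x\in V}\deg(x)g(x)f(x)h(x)=\langle f,D(g)h\rangle.
\end{equation*}

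First I would use self-adjointness of $D(g)$ to shuffle it across the inner product, obtaining
\begin{equation*}
\langle f,\,D(g)(L-\lambda_k I)D(g)f\rangle=\langle D(g)f,\,(L-\lambda_k I)D(g)f\rangle=\langle fg,\,L(fg)\rangle-\lambda_k\langle fg,fg\rangle.
\end{equation*}
Then I would invoke Lemma 4.2 to rewrite $\langle fg,L(fg)\rangle$ as $\langle fg,fLg\rangle+\sum_{x,y\in e\in E}\deg(x)(-L_{xy})g(x)g(y)(f(x)-f(y))^2$.

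Finally, substituting the eigenvalue equation $Lg=\lambda_k g$ collapses the first term: pointwise, $f(Lg)=\lambda_k fg$, hence $\langle fg,fLg\rangle=\lambda_k\langle fg,fg\rangle$, which cancels exactly against the $-\lambda_k\langle fg,fg\rangle$ from the first step. What remains is the edge sum, which is precisely the claimed right-hand side.

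There is no real obstacle here; the only point requiring a moment's thought is that the weighted scalar product uses $\deg(x)$ while the quadratic form $\langle fg,L(fg)\rangle$ also involves $\deg(x)$ implicitly through the normalization of $L$, so one must apply Lemma 4.2 exactly as stated (with the degree factor living inside the edge sum) rather than attempting a renormalization. Once $D(g)$ is recognized as self-adjoint pointwise multiplication, the corollary is a two-line consequence of Lemma 4.2 and $Lg=\lambda_k g$.
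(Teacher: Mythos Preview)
Your proposal is correct and follows essentially the same approach as the paper: both arguments identify $D(g)f$ with the pointwise product $fg$, apply Lemma~4.2, and use $Lg=\lambda_k g$ to cancel the $\langle fg,fLg\rangle$ term against $\lambda_k\langle fg,fg\rangle$. The only cosmetic difference is order of presentation---the paper first computes $\langle fg,(L-\lambda_k I)fg\rangle$ and then identifies it with $\langle f,D(g)(L-\lambda_k I)D(g)f\rangle$, whereas you start from the latter---but the content is identical.
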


\begin{proof}
By Lemma \ref{4.2}, we have
\begin{eqnarray*}
  <fg, (L-\lambda_k I)fg>&=&<fg, L(fg)-fLg> \\
   &=& \sum_{x, y\in e\in E} deg(x)(-L_{xy})g(x)g(y)(f(x)-f(y))^2.
\end{eqnarray*}
Then the corollary follows directly by observing that
\begin{eqnarray*}
  <fg, (L-\lambda_k I)fg>&=&<f, D(g)(L-\lambda_k I)D(g)f>.
\end{eqnarray*}
\end{proof}

For any $k$, let $f_k$ be the eigenfunction of $L$ corresponding to the $k$-th eigenvalue $\lambda_k$. Next, we show the estimates of $\mathfrak{S}(f_k)$ and $\mathfrak{W}(f_k)$ in Theorem 4.1.

\begin{proof}(Proof of Theorem 4.1:)
Estimate of $\mathfrak{S}(f_k)$. Let $\{\Omega_i\}^m_{i=1}$ be the strong nodal domains of $f_k$, where $m=\mathfrak{S}(f_k)$. For each $i$, we define
\begin{equation}\label{}
     g_i(x)=\left\{
      \begin{array}{ll}
      f_k(x), \ if \ \ x \in \Omega_i;\\
      0, \ \ \ \ \ \ otherwise.
   \end {array}
   \right.
\end{equation}
Since $g_i, i=1, 2, \cdots, m$, are linearly independent, we can find $a_i\in \mathbb{R}, i=1, 2, \cdots, m$, such
that the function $g: =\sum^m_{i=1} a_ig_i$ satisfies
\begin{equation*}
  <g, f_i>=0, \ for \ i=1, 2, \cdots, m-1.
\end{equation*}
We introduce a function $a: V\rightarrow \mathbb{R}$ defined as $a(x)=a_i$ if $x\in \Omega_i$ for some $i$ and $a(x) = 0$ otherwise. Then we can write $g=af_k$. Applying Lemma \ref{4.2} yields
\begin{eqnarray*}
<g, Lg>&=& <af_k, aLf_k>\\
&& +\sum_{x_j,x_{j+1}\in e\in E} deg(x_j) (-L_{x_jx_{j+1}})f_k(x_j)f_k(x_{j+1})(a(x_j)-a(x_{j+1}))^2\\
&=& \lambda_k<g, g>\\
&& +\sum_{x_j,x_{j+1}\in e\in E} deg(x_j) (-L_{x_jx_{j+1}})f_k(x_j)f_k(x_{j+1})(a(x_j)-a(x_{j+1}))^2
\end{eqnarray*}
where $x_j, x_{j+1}\in e=\{x_i, x_{i+1}, \cdots, x_{i+l-1}\} \in E$ for each $j=i, i+1, \cdots, i+l-2$.

By Theorem \ref{2.9}, we derive
\begin{equation*}
  \lambda_m \leq \frac{<g, Lg>}{<g, g>}\leq \lambda_k+\sum_{x_j,x_{j+1}\in e\in E} deg(x_j) (-L_{x_jx_{j+1}})f_k(x_j)f_k(x_{j+1})(a(x_j)-a(x_{j+1}))^2.
\end{equation*}
For each edge $e$, $x_j, x_{j+1}\in e=\{x_i, x_{i+1}, \cdots, x_{i+l-1}\} \in E$ for each $j=i, i+1, \cdots, i+l-2$, if $(-L_{x_jx_{j+1}})f_k(x_j)f_k(x_{j+1})> 0$, then $f_k(x_j)f_k(x_{j+1})sgn(e)> 0$. That is, the vertices $x_j$ and $x_{j+1}$ lie in the same strong nodal domain.

Hence, $a(x_j)-a(x_{j+1})= 0$ and $(-L_{x_jx_{j+1}})f_k(x_j)f_k(x_{j+1})(a(x_j)-a(x_{j+1}))^2=0$. If, otherwise, $(-L_{x_jx_{j+1}})f_k(x_j)f_k(x_{j+1})\leq 0$, we have $(-L_{x_jx_{j+1}})f_k(x_j)f_k(x_{j+1})(a(x_j)-a(x_{j+1}))^2\leq 0$. Therefore, we obtain
\begin{equation*}
  \sum_{x_j,x_{j+1}\in e\in E} deg(x_j)  (-L_{x_jx_{j+1}})f_k(x_j)f_k(x_{j+1})(a(x_j)-a(x_{j+1}))^2 \leq 0.
\end{equation*}
This leads to $\lambda_m \leq \lambda_k$. Recall that $\lambda_k<\lambda_{k+r}$, we have $\mathfrak{S}(f_k)=m\leq k+r-1$.
\end{proof}

In order to show the estimate of $\mathfrak{W}(f_k)$, we first prepare the following discrete unique continuation lemma for eigenfunctions.

\begin{lemma}\label{4.4}
Let $\Gamma=(H, \sigma)$ be a connected signed hypergraph where $H=(V, E)$, $L$ is the normalized Laplacian operator associated to $\Gamma$. Consider an eigenfunction $f$ of the normalized Laplacian operator $L$ corresponding to an eigenvalue $\lambda$. Define a function $g_i$ as below
\begin{equation}\label{3}
     g_i(x)=\left\{
      \begin{array}{ll}
      f(x), \ \ if \ \ x \in D_i;\\
      0, \ \ \ \ \ \ otherwise.
   \end {array}
   \right.
\end{equation}
If the function
\begin{equation*}
  g :=\sum^m_{i=1}a_ig_i, \ where \ a_i\in \mathbb{R}, i=1,2, \cdots, m,
\end{equation*}
is also an eigenfunction of $L$ corresponding to $\lambda$, then we have
\begin{equation*}
  a_1=a_2=\cdots=a_m.
\end{equation*}
\end{lemma}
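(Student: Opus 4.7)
The plan is to combine a vanishing energy identity derived from Corollary \ref{4.3} with a local unique-continuation argument at every zero vertex of $f$, and then propagate the resulting equalities through the weak-nodal-domain graph using Proposition \ref{3.7}.

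Concretely, I would set $a : V \to \mathbb{R}$ by $a(x) = a_i$ when $x \in D_i$ and $f(x) \neq 0$ (well defined by Proposition \ref{3.8}, since a nonzero vertex lies in a unique weak nodal domain), and $a(x) = 0$ otherwise. Then $g(x) = f(x) a(x)$ for all $x \in V$, i.e.\ $g = D(f)\, a$. Applying Corollary \ref{4.3} with eigenfunction $f$ and test function $a$, the left-hand side becomes $\langle g, (L - \lambda I) g \rangle = 0$ because $L g = \lambda g$, so
\begin{equation*}
\sum_{x, y \in e \in E} \deg(x)\,(-L_{xy})\, f(x)\, f(y)\, \bigl( a(x) - a(y) \bigr)^{2} \;=\; 0. \tag{$\ast$}
\end{equation*}

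For local information, at every zero vertex $z$ of $f$ we have $g(z) = 0$ as well, so $L f(z) = 0$ and $L g(z) = 0$ read $\sum_{y \sim z} A_{zy} f(y) = 0$ and $\sum_{y \sim z} A_{zy}\, a(y)\, f(y) = 0$. By Corollary \ref{3.9}, the nonzero neighbours of $z$ lie in at most two weak nodal domains $D$ and $D'$; writing $S_D := \sum_{y \sim z,\; y \in D,\; f(y) \neq 0} A_{zy} f(y)$ and analogously $S_{D'}$, these two relations simplify to $S_D + S_{D'} = 0$ and $a_D S_D + a_{D'} S_{D'} = 0$, whence $(a_D - a_{D'}) S_D = 0$. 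Thus $S_D \neq 0$ immediately forces $a_D = a_{D'}$. For the remaining adjacencies (either $S_D = 0$, or $D, D'$ are adjacent through a common edge containing two nonzero vertices rather than through a zero vertex), I would fall back on $(\ast)$: within-domain pairs contribute nothing because $a(x) = a(y)$ there, and the $W$-path sign condition pins down the sign of $-L_{xy} f(x) f(y)$ on cross-domain pairs, so the only way $(\ast)$ can vanish is if $(a_D - a_{D'})^{2} = 0$. Invoking Proposition \ref{3.7}, the graph $H_D$ of weak nodal domains is connected, so pairwise equalities propagate to $a_1 = a_2 = \cdots = a_m$.

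The step I expect to be most delicate is the sign bookkeeping in the last paragraph: $-L_{xy}$ aggregates contributions from every edge joining $x$ and $y$, and to verify that cross-domain contributions to $(\ast)$ really share a definite sign one has to unwind the definitions of $\operatorname{sgn}(e)$, the adjacency matrix $A$, and the $W$-path sign product simultaneously. The zero-vertex argument with $S_D$ is the conceptually clean half; the $W$-path signed-energy computation is where the hypergraph (rather than graph) structure becomes genuinely awkward.
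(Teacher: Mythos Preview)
Your outline follows the paper's proof closely: both arguments write $g=af$, use the energy identity coming from Lemma~\ref{4.2}/Corollary~\ref{4.3} to handle adjacency of two weak nodal domains through an edge containing two nonzeros, use the eigenvalue equation at a zero vertex together with Corollary~\ref{3.9} to handle adjacency through zeros, and finish with the connectedness of $H_D$ from Proposition~\ref{3.7}. Your derivation of $(\ast)=0$ directly from $Lg=\lambda g$ via Corollary~\ref{4.3} is in fact a little cleaner than the paper's sandwich argument in its Case~1.

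There is, however, a genuine gap in your plan for the subcase $S_D=0$. You propose to ``fall back on $(\ast)$'', but $(\ast)$ cannot say anything there: every summand in $(\ast)$ carries a factor $f(x)f(y)$, so pairs involving a zero vertex contribute nothing. If the only contact between $D$ and $D'$ is through zero vertices $z$, then there is no term in $(\ast)$ linking $a_D$ to $a_{D'}$ at all, and the identity is silent about whether $a_D=a_{D'}$. The paper does not fall back on the energy identity here; instead it shows that $S_D\neq 0$ whenever $z$ has a nonzero neighbour in $D'\setminus D$. The point is that the $W$-path sign condition forces all terms $L_{zy}f(y)$ with $y\sim z$, $f(y)\neq 0$, $y\in D'\setminus D$ to share a common sign (any two such $y,y'$ are joined to $z$ by edges $e,e'$ and lie in the same $R_W$-class, so $f(y)\,\mathrm{sgn}(e)\,\mathrm{sgn}(e')\,f(y')>0$), hence their sum cannot vanish. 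You should insert exactly this sign argument to conclude $S_D\neq 0$ and hence $a_D=a_{D'}$ in the zero-vertex branch; the appeal to $(\ast)$ is unnecessary there and, as written, does not close the gap.
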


\begin{proof}
We first observe that $g$ can be reformulated as a product of two functions, $g=af$, where the function $a : V\rightarrow \mathbb{R}$ is defined as $a(x)=a_i$ if $x\in D_i$ and $f(x)\neq 0$, and $a(x)=0$ otherwise.

For any adjacent $D_i$ and $D_j$, we can always find $x_{j^{\prime}}\in D_i, x_{j^{\prime}+1}\in D_j\setminus D_i$ such that $x_{j^{\prime}}, x_{j^{\prime}+1}\in e^{\prime}=\{x_{i^{\prime}}, x_{(i+1)^{\prime}}, \cdots, x_{(i+l-1)^{\prime}}\} \in E$ for each $j^{\prime}=i^{\prime}, (i+1)^{\prime}, \cdots, (i+l-2)^{\prime}$. Indeed, when $D_i\cap D_j\neq \emptyset$, due to the connectedness of $D_j$, there exists a path connecting any $x_i\in D_i\cap D_j$ and any $z\in D_j \setminus D_i$.

Next, we show $a_i=a_j$ for any two adjacent weak nodal domains $D_i$ and $D_j$ if $a_i\neq 0$. We divide our arguments into two cases.

{\bf Case \ 1:} $f(x_{j^{\prime}})\neq 0$. By Lemma \ref{4.2}, we have
\begin{equation}\label{4}
  \lambda \leq \frac{<g, Lg>}{<g, g>}\leq \lambda+\frac{1}{<g, g>}\sum_{x_j,x_{j+1}\in e\in E} deg(x_j) (-L_{x_jx_{j+1}})f(x_j)f(x_{j+1})(a(x_j)-a(x_{j+1}))^2
\end{equation}
where $x_j, x_{j+1}\in e=\{x_i, x_{i+1}, \cdots, x_{i+l-1}\} \in E$ for each $j=i, i+1, \cdots, i+l-2$.

For each edge $e$, where $x_j, x_{j+1}\in e=\{x_i, x_{i+1}, \cdots, x_{i+l-1}\} \in E$ for each $j=i, i+1, \cdots, i+l-2$, if $(-L_{x_jx_{j+1}})f(x_j)f(x_{j+1})> 0$, then $f(x_j)f(x_{j+1})sgn(e)> 0$. That is, the vertices $x_j$ and $x_{j+1}$ lie in the same weak nodal domain with both $f(x_j)$ and $f(x_{j+1})$ nonzero, which means $a(x_j)-a(x_{j+1})= 0$. Therefore, we obtain
\begin{equation}\label{5}
(-L_{x_jx_{j+1}})f(x_j)f(x_{j+1})(a(x_j)-a(x_{j+1}))^2\leq 0,
\end{equation}
where $x_j, x_{j+1}\in e=\{x_i, x_{i+1}, \cdots, x_{i+l-1}\} \in E$ for each $j=i, i+1, \cdots, i+l-2$.

Combining (4) and (5) yields that for any $e\in E$,
\begin{equation}\label{6}
(-L_{x_jx_{j+1}})f(x_j)f(x_{j+1})(a(x_j)-a(x_{j+1}))^2=0,
\end{equation}
where $x_j, x_{j+1}\in e=\{x_i, x_{i+1}, \cdots, x_{i+l-1}\} \in E$ for each $j=i, i+1, \cdots, i+l-2$.

For the edge $e^{\prime}\in E$, where $x_{j^{\prime}}, x_{j^{\prime}+1}\in e^{\prime}=\{x_{i^{\prime}}, x_{(i+1)^{\prime}}, \cdots, x_{(i+l-1)^{\prime}}\} \in E$ for each $j^{\prime}=i^{\prime}, (i+1)^{\prime}, \cdots, (i+l-2)^{\prime}$, we have $f(x_{j^{\prime}+1})\neq 0$ since $x_{j^{\prime}+1}\in D_i\cap D_j$. Moreover, we have
\begin{equation*}
  (-L_{x_{j^{\prime}}x_{j^{\prime}+1}})f(x_{j^{\prime}})f(x_{j^{\prime}+1})<0,
\end{equation*}
since $x_{j^{\prime}}, x_{j^{\prime}+1}$ lies in two different weak nodal domains with both $f(x_{j^{\prime}})$ and $f(x_{j^{\prime}+1})$ nonzero. By (6), we have $a(x_{j^{\prime}})=a(x_{j^{\prime}+1})$. Since $f(x_{j^{\prime}})\neq 0$ and $f(x_{j^{\prime}+1})\neq 0$, we have $a_i=a_j$.

{\bf Case \ 2:} $f(x_{j^{\prime}})=0$. By the Corollary \ref{3.9}, we have $S_1(x_{j^{\prime}}) : =\{x_{j^{\prime}+1} \in V : x_{j^{\prime}}, x_{j^{\prime}+1}\in e^{\prime}=\{x_{i^{\prime}}, x_{(i+1)^{\prime}}, \cdots, x_{(i+l-1)^{\prime}}\} \in E \}\subset D_i\cup D_j$ for each $j^{\prime}=i^{\prime}, (i+1)^{\prime}, \cdots, (i+l-2)^{\prime}$. We define a function $h : =f-\frac{1}{a_i}g$. Observe that $h|_{D_i}=0$, and $h$ is an eigenfunction of $L$ corresponding to $\lambda$. So we have
\begin{eqnarray*}
  0 &=& -\lambda h(x_{j^{\prime}})=Lh(x_{j^{\prime}})= \sum_{x_{j^{\prime}}, x_{j^{\prime}+1}\in e^{\prime}\in E} L_{x_{j^{\prime}}x_{j^{\prime}+1}} h(x_{j^{\prime}+1})\\
  &=& \sum_{x_{j^{\prime}}, x_{j^{\prime}+1}\in e^{\prime}\in E, x_{j^{\prime}+1}\in D_j \setminus D_i} L_{x_{j^{\prime}}x_{j^{\prime}+1}} h(x_{j^{\prime}+1})\\
  &=& (1-\frac{a_j}{a_i})\sum_{x_{j^{\prime}}, x_{j^{\prime}+1}\in e^{\prime}\in E, x_{j^{\prime}+1}\in D_j \setminus D_i} L_{x_{j^{\prime}}x_{j^{\prime}+1}} f(x_{j^{\prime}+1})
\end{eqnarray*}
for $x_{j^{\prime}+1} \in V$ where $x_{j^{\prime}}, x_{j^{\prime}+1}\in e^{\prime}=\{x_{i^{\prime}}, x_{(i+1)^{\prime}}, \cdots, x_{(i+l-1)^{\prime}}\} \in E$ for each $j^{\prime}=i^{\prime}, (i+1)^{\prime}, \cdots, (i+l-2)^{\prime}$.

Since there exists a vertex $x_{j^{\prime}+2}\in e^{\prime\prime}$ such that $f(x_{j^{\prime}+2})\neq 0$, where $x_{j^{\prime}}, x_{j^{\prime}+2}\in e^{\prime\prime}=\{x_{i^{\prime\prime}}, x_{(i+1)^{\prime\prime}}, \cdots, x_{(i+l-1)^{\prime\prime}}\} \in E$ for each $j^{\prime\prime}=i^{\prime\prime}, (i+1)^{\prime\prime}, \cdots, (i+l-2)^{\prime\prime}$.

By the definition of the weak nodal domain, it holds that
\begin{equation*}
  f(x_{j^{\prime}+1})sgn(e^{\prime})sgn(e^{\prime\prime})f(x_{j^{\prime}+2})>0
\end{equation*}
for any $x_{j^{\prime}+1}, x_{j^{\prime}+2} \in D_j\setminus D_i$.

Thus we have
\begin{equation*}
  f(x_{j^{\prime}+1}) L_{x_{j^{\prime}}x_{j^{\prime}+1}} L_{x_{j^{\prime}}x_{j^{\prime}+2}} f(x_{j^{\prime}+2})>0
\end{equation*}
for any $x_{j^{\prime}+1}, x_{j^{\prime}+2} \in D_j\setminus D_i$.

Therefore, we have
\begin{equation*}
  \sum_{x_{j^{\prime}}, x_{j^{\prime}+1}\in e^{\prime}\in E, x_{j^{\prime}+1}\in D_j \setminus D_i} L_{x_{j^{\prime}}x_{j^{\prime}+1}} f(x_{j^{\prime}+1})\neq 0
\end{equation*}
This tells that $a_i=a_j$.

Since $g$ is an eigenfunction, at least one of $a_i, i=1, \cdots, m$ is non-zero. Then the lemma follows directly from the above argument and the connectedness from Proposition 3.7.
\end{proof}

\begin{proof}(Proof of Theorem \ref{4.1}:)
Estimate of $\mathfrak{W}(f_k)$. We first assume that the signed hypergraph $\Gamma=(H, \sigma)$ is connected. We denote all weak nodal domains of $f_k$ by $\{D_i\}^m_{i=1}$, where $m=\mathfrak{W}(f_k)$. We introduce for each $i$
\[
     g_i(x)=\left\{
      \begin{array}{ll}
      f_k(x), \ if \ \ x \in D_i;\\
      0, \ \ \ \ \ \ otherwise.
   \end {array}
   \right.
\]
Let $a_i\in \mathbb{R}, i=1, 2, \cdots, m$, be $m$ constants such that $g: =\sum^m_{i=1} a_ig_i$ satisfies $<g, f_i>=0$, for $i=1, 2, \cdots, m-1$. We define a function $a: V\rightarrow \mathbb{R}$ as $a(x)=a_i$ if $x\in D_i$ and $f_k(x)\neq 0$, and $a(x)=0$ otherwise. By construction, $g=af_k$. We then derive
\begin{equation}\label{7}
\lambda_m \leq \frac{<g, Lg>}{<g, g>}\leq \lambda_k+\sum_{x_j,x_{j+1}\in e\in E}deg(x_j) (-L_{x_jx_{j+1}})f_k(x_j)f_k(x_{j+1})(a(x_j)-a(x_{j+1}))^2.
\end{equation}
where $x_j, x_{j+1}\in e=\{x_i, x_{i+1}, \cdots, x_{i+l-1}\} \in E$ for each $j=i, i+1, \cdots, i+l-2$.

Similarly as in the proof of (5), we have
\begin{equation}\label{8}
(-L_{x_jx_{j+1}})f(x_j)f(x_{j+1})(a(x_j)-a(x_{j+1}))^2\leq 0,
\end{equation}
where $x_j, x_{j+1}\in e=\{x_i, x_{i+1}, \cdots, x_{i+l-1}\} \in E$ for each $j=i, i+1, \cdots, i+l-2$.\\
Hence,
\begin{equation}\label{9}
\lambda_m \leq \lambda_k.
\end{equation}
We argue by contradiction. Suppose $m>k$, then $\lambda_m \geq \lambda_k$. By (7) and (9), we have
\begin{equation}\label{10}
\lambda_m=\frac{<g, Lg>}{<g, g>}=\lambda_k.
\end{equation}
Hence, $g$ is an eigenfunction of $L$ corresponding $\lambda_k$.

Then we can apply Lemma \ref{4.4} to show that $g=af_k$ where $a$ is a nonzero constant function. However, $<g, f_j>=0$ for any $j<m$ by construction. Our assumption $m>k$ then implies $<g, f_k>=0$. That is, $a<f_k, f_k>=0$. This is a contradiction. So we get
\begin{equation}\label{11}
\mathfrak{W}(f_k)=m\leq k.
\end{equation}

In general, we denote by $\{\Gamma_i\}^c_{i=1}$ the $c$ connected components of the signed hypergraph $\Gamma=(H, \sigma)$ where $H=(V, E)$, $L$ is the normalized Laplacian operator associated to $\Gamma$. Let $L^i, f_k^i$ be the restriction of $L, f_k$ to the connected component $\Gamma_i$ respectively. Then either $f_k^i$ is identically zero on $\Gamma_i$ or $L_if_k^i=\lambda^i_{k_i}f_k^i$, where $\lambda^i_{k_i}=\lambda_k$ is the $k_i$-th eigenvalue of $L^i$. Moreover, we can assume $\lambda^i_{k_{i-1}}<\lambda^i_{k_i}$. Without loss of generality, we assume $\{\Gamma_i\}^c_{l=1}$ be the connected components on which $f_k$ is not identically zero. Employing the fact (11) we estimate
\begin{equation*}
  \mathfrak{W}(f_k)\leq \sum_{i=1}^l k_i \leq \sum_{i=1}^l (k_i-1)+l <k+l\leq k+c.
\end{equation*}
This completes the proof.
\end{proof}

\section{Lower bound of the number of strong nodal domains}

\noindent

 In this section we  give a lower bound estimates of the number of strong nodal domains.

First, We give some notations. A cycle $\{x_i\}^n_{i=1}$ of a signed hypergraph $\Gamma=(H, \sigma)$ is called a strong nodal domain cycle ($S$-cycle for short) of a function $f$ on $\Gamma$ if it is an $S$-path of $f$ and $x_n=x_1$.

\begin{Def}\label{5.1}
Let $H=(V(H), E(H))$ be a hypergraph. Define
\begin{equation*}
  l(H): =\sum_{e\in E(H)}(|e|-1)-|V(H)|+c(H),
\end{equation*}
where $c(H)$ is the number of connected components of $H$. Let $\Gamma=(H, \sigma)$ be a signed hypergraph and $f: V\rightarrow \mathbb{R}$ be a function. Let $H_1$ be a hypergraph whose vertex set $V(H_1)=V$ and edge set
$E(H_1) : =\{e : e=\{x_i, x_{i+1}, \cdots, x_{i+l-1}\} \in E, f(x_j)sgn(e)f(x_{j+1})>0 \ for \ each \ j=i, i+1, \cdots, i+l-2.\}$ Define
\begin{equation*}
  l_{+}(H, \sigma, f): =\sum_{e\in E(H_1)}(|e|-1)-|V(H_1)|+c(H_1),
\end{equation*}
where $c(H_1)$ is the number of connected components of $H_1$.
\end{Def}

{\bf Remark.} The number $l(H)$, is the minimal number of edges that need to be removed from $H$ in order to turn it into a hyperforest. $l(H)$ is cyclomatic numbers of hypergraph $H$ in \cite{Acharya1979}. The number of $l_{+}(H, \sigma, f)$ is the dimension of the vector space of $S$-cycles of the function $f$ on the signed hypergraph $\Gamma=(H, \sigma)$.

\begin{Def}\label{5.2}
Let $H=(V, E)$ be a hypergraph. A vertex $x\in V$ is called a tree-like vertex if the induced subhypergraph by removing $x$ from $H$ increases the number of connected components by $d_{x}-1$.
\end{Def}

All vertices in a hyperforest are tree-like. Moreover, we have the observations below.

\begin{pro}\label{5.3}
The tree-like vertices have the following properties:

(i) Let $x$ be a tree-like vertex in a hypergraph $H=(V, E)$. Let $H\setminus \{y\}$ be a hypergraph obtained from $H$ by removing any vertex $y\neq x$ and its incident edges. Then $x$ is still a tree-like vertex in the graph $H\setminus \{y\}$.

(ii) Let $H^{\prime}$ be the induced subhypergraph of $H=(V, E)$ on the set $V\setminus Y$ where $Y$ is a set of tree-like vertices. Then we have $l(H^{\prime})=l(H)+\sum_{y\in Y} d_y$.
\end{pro}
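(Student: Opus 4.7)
My plan is to deduce both parts from a single computational identity combined with the hereditary nature of ``tree-likeness''. Expanding the definition of $l$ directly and using the induced-subhypergraph convention of Definition \ref{2.3}, one obtains, for any vertex $y$ of a hypergraph $G$,
\[
l(G \setminus \{y\}) - l(G) \;=\; \bigl(c(G \setminus \{y\}) - c(G)\bigr) - (d_y - 1).
\]
Since removing a single vertex can increase the number of components by at most $d_y - 1$, the right-hand side is non-positive, and vanishes precisely when $y$ is tree-like. This yields the reformulation I will repeatedly exploit: tree-like vertices are exactly those whose removal produces a prescribed change in $l$, and the identity above provides the one-step bookkeeping template needed in part (ii).

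For part (i), the cleanest route is to characterize tree-like vertices via hypergraph cycles: unpacking the definition, $x$ is tree-like in $H$ if and only if $x$ lies on no cycle of $H$ in the sense of Section 2. Since every cycle of $H \setminus \{y\}$ is also a cycle of $H$ (restricting a hyperedge to $V \setminus \{y\}$ preserves all incidences not involving $y$), the absence of cycles through $x$ is inherited, so $x$ remains tree-like in $H \setminus \{y\}$. A purely algebraic alternative uses the commutativity $(H\setminus\{x\})\setminus\{y\} = (H\setminus\{y\})\setminus\{x\}$ together with the monotonicity $l(G') \leq l(G)$ for induced subhypergraphs and the displayed identity above, which together pin down the necessary equalities.

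For part (ii), I would proceed by induction on $|Y|$. The base case $|Y|=1$ is a direct computation: combine $|V(H')| = |V|-1$, the change $\sum_{e' \in E(H')}(|e'|-1) = \sum_{e \in E(H)}(|e|-1) - d_y$ arising because each incident edge loses one element, and the tree-like identity $c(H') - c(H) = d_y - 1$, and then substitute into the definition of $l$ to read off the per-vertex contribution. For the induction step, part (i) guarantees that after deleting one tree-like vertex, all remaining members of $Y$ are still tree-like in the reduced hypergraph, so the same bookkeeping applies; telescoping the per-step contributions over $y \in Y$ yields the claimed identity.

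The main obstacle is part (i): tree-likeness is a global statement about connectivity, and its stability under deletion of an unrelated vertex is not immediate from the definition. The cycle-based characterization reduces the problem to a hereditary property of subhypergraphs, but establishing that characterization requires some care, because a hypergraph cycle is an alternating sequence of distinct vertices and distinct edges whose interaction with the edge restrictions implicit in the induced subhypergraph is non-trivial. Once part (i) is secured, the inductive accounting in part (ii) is essentially routine.
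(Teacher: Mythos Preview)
Your structural outline is exactly the paper's: for (i) both you and the paper invoke the characterization ``$x$ is tree-like iff $x$ lies on no cycle'' and deduce heredity from the fact that cycles of $H\setminus\{y\}$ are cycles of $H$; for (ii) both arguments compute the change in $l$ under a single tree-like deletion and then iterate using (i).

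There is, however, a genuine arithmetic gap in your (ii). Your displayed identity
\[
l(G\setminus\{y\})-l(G)=\bigl(c(G\setminus\{y\})-c(G)\bigr)-(d_y-1),
\]
derived under the induced-subhypergraph convention of Definition~\ref{2.3} (each incident edge shrinks by one, so $\sum_e(|e|-1)$ drops by $d_y$), gives a per-step contribution of \emph{zero} when $y$ is tree-like. Telescoping then yields $l(H')=l(H)$, not the claimed $l(H')=l(H)+\sum_{y\in Y}d_y$. The paper's own one-line computation instead keeps $\sum_{e\in E}(|e|-1)$ unchanged after removing $y$ and so obtains $+d_y$ per step; that is the convention under which the stated identity holds. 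Whichever convention is intended, your ``substitute \dots\ to read off the per-vertex contribution'' step does not actually land on $d_y$ with the ingredients you wrote down. Before the induction, you need to pin down exactly how edge sizes and edge sets are affected by removing $y$ in this proposition and verify that the resulting one-step change really is $d_y$; otherwise the telescoping proves a different formula than the one asserted.
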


\begin{proof}
The property (i) follows directly from the observation that a vertex is tree-like if and only if it belongs to no cycle. Moreover, for any $y\in Y$, we have
\begin{equation*}
  l(H\setminus \{y\})=\sum_{e\in E}(|e|-1)-(|V|-1)+(c(H)+d_y-1)=l(H)+d_y.
\end{equation*}
By (i), we can apply the above argument iteratively to conclude (ii).
\end{proof}

Next, we define the following particular set of zeros of a function on a hypergraph.

\begin{Def}\label{5.4}
Let $H=(V, E)$ be a hypergraph and $f: V\rightarrow \mathbb{R}$ be a function. We define the Fiedler zero set of $f$ on $H$ as below:
\begin{eqnarray*}
  \mathcal{F}(H, f) &:=& \{x\in V : f(x)=0, \ and \ either \ f(y)=0 \ for \ all \ y \ such \ that \ x, y\in e\in E, \\
  && or \ x \ is \ not \ tree-like\}.
\end{eqnarray*}
We denote by $\mathcal{F}^{c}(H, f)$ the complement of $\mathcal{F}(H, f)$ in the zero set of $f$, i.e.,
\begin{eqnarray*}
  \mathcal{F}^{c}(H, f) &:=& \{x\in V : f(x)=0, \ x \ is \ tree-like \ and \ there \ exists \ y \ such \ that  \\
   &&  \ x, y\in e\in E \ and \ f(y)\neq 0\}.
\end{eqnarray*}
\end{Def}

The following definition of weak vertex addition of $\hat{v}$ is similar with the induced subhypergraph by removing $\hat{v}$ from $\Gamma=(V, H)$.

\begin{Def}[\cite{Reff2014}]\label{5.5}
Given $\hat{v}\in V$, we let $\Gamma-\hat{v} : =(\hat{V}, \hat{H})$, where:

$\bullet \hat{V}= V\setminus \{\hat{v}\}$, and

$\bullet \hat{H}=\{h \setminus \{\hat{v}\} : h\in H\}.$

$\Gamma-\hat{v}$ is obtained from $\Gamma$ by a weak vertex deletion of $\hat{v}$. $\Gamma$ is obtained from $\Gamma-\hat{v}$ by a weak vertex addition of $\hat{v}$. We also allow empty hyperedges.
\end{Def}

For the readers' convenience, we recall the definition of supertree.
\begin{Def}
A supertree is a hypergraph which is both connected and acyclic.
\end{Def}
A characterization of acyclic hypergraph has been given in Berge's (1976) textbook and particularly for the connected case is the following result.

\begin{pro}[\cite{Berge1976}]
If $H$ is a connected hypergraph with $n$ vertices and $m$ edges, then it is acyclic if and only if $\sum_{i\in [m]}(|e_i|-1)=n-1$.
\end{pro}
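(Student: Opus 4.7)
The plan is to rewrite the identity $\sum_{i\in[m]}(|e_i|-1)=n-1$ in terms of the cyclomatic number $l(H)$ from Definition \ref{5.1}. Since $H$ is connected, $c(H)=1$, so the stated equality is just $l(H)=0$. Hence the proposition reduces to proving: for any connected hypergraph $H$, the equality $l(H)=0$ holds if and only if $H$ is acyclic. This in fact follows from a more general fact that I will prove first: $l(H)\geq 0$ for every hypergraph, with equality characterised by a cycle-free edge-by-edge construction.

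The first step is to show $l(H)\geq 0$ by induction on the number of edges $m$. For $m=0$ we have $\sum(|f|-1)=0$ and $c(H)=n$, so $l(H)=0$. For the inductive step, let $H'=H\cup\{e\}$. The sum $\sum_{f}(|f|-1)$ grows by $|e|-1$, while the edge $e$ meets at most $|e|$ distinct connected components of $H$, so $c(H)-c(H')\leq |e|-1$. Putting these together,
\[
l(H')-l(H)=(|e|-1)-\bigl(c(H)-c(H')\bigr)\geq 0,
\]
so $l(H')\geq l(H)\geq 0$ by induction.

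The crucial observation is that the bound $c(H)-c(H')\leq|e|-1$ is tight precisely when the $|e|$ vertices of $e$ lie in $|e|$ different components of $H$, and that this is the exact condition under which adding $e$ creates no cycle. Indeed, if two vertices $u,v\in e$ already belong to the same component of $H$, then there is a path $u=w_0,f_1,w_1,f_2,\dots,f_p,w_p=v$ in $H$ with the $w_i$ all distinct and the $f_j$ all distinct; appending $e$ yields the closed sequence $u,f_1,w_1,\dots,f_p,v,e,u$, which is a cycle of $H'$ in the sense of the paper (the vertices $u,w_1,\dots,w_{p-1},v$ are distinct and the edges $f_1,\dots,f_p,e$ are distinct, since $e\notin E(H)$). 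Conversely, if the vertices of $e$ lie in $|e|$ distinct components of $H$, no cycle of $H'$ can traverse $e$, because that would supply a path in $H$ between two vertices of $e$ that are in different components.

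To finish, build $H$ edge by edge in any order starting from the empty hypergraph on $V$. Along this process $l$ is non-decreasing and begins at $0$, so $l(H)=0$ if and only if $l$ remains $0$ at every step, which by the observation above is equivalent to no edge addition ever creating a cycle. Since any cycle created at an intermediate stage persists as a sub-configuration of $H$, this is in turn equivalent to $H$ being acyclic. Using $c(H)=1$ to translate $l(H)=0$ into $\sum_{i\in[m]}(|e_i|-1)=n-1$ finishes the proof. The only delicate point is the construction of the cycle in the third step: one must take a \emph{shortest} $u$–$v$ path in $H$ to guarantee that the internal vertices and edges are all distinct, so that concatenation with $e$ genuinely satisfies the paper's definition of a cycle.
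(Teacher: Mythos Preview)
The paper does not prove this proposition; it is quoted from Berge's textbook \cite{Berge1976} without proof. Your argument is correct and self-contained: recasting the identity as $l(H)=0$ via Definition~\ref{5.1}, showing that $l$ is non-negative and non-decreasing under edge addition, and characterising the equality case at each step as precisely the absence of cycle creation is a clean and standard route to the result. One minor remark: your closing caveat about needing a \emph{shortest} $u$--$v$ path is unnecessary here, since the paper's definition of a path in Section~2 already requires all vertices and all edges to be pairwise distinct; hence any path in that sense concatenates with the new edge $e$ to yield a valid cycle.
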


\begin{theorem}\label{5.6}
Let $\Gamma=(H, \sigma)$ be a signed hypergraph where $H=(V, E)$, $L$ is the normalized Laplacian operator associated to $\Gamma$. Let $\lambda_k$ be the $k$-th eigenvalue of $L$ with multiplicity $r$ and eigenfunction $f_k$ such that
\begin{equation*}
  \lambda_1 \leq \cdots \leq \lambda_{k-1} \leq \lambda_{k}=\cdots=\lambda_{k+r-1} < \lambda_{k+r} \leq \cdots \leq \lambda_{n}.
\end{equation*}
Then we have
\begin{equation*}
  \mathfrak{S}(f_k)\geq k+r-1-l^{\prime}+l_{+}-|\mathcal{F}|,
\end{equation*}
where $\mathcal{F}(H, f_k)$ is the Fielder zero set, $l_{+}=l_{+}(H, \sigma, f_k)$ is the dimension of S-cycle space of $H$, and $l^{\prime}=l(H^{\prime})$ is the dimension of the cycles space of $H^{\prime}$, where $H^{\prime}$ is the induced subhypergraph of $H$ on the set of nonzeros $V\setminus \{x : f_k(x)=0\}$.
\end{theorem}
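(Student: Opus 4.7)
The plan is to sandwich the dimension of a maximal non-positive subspace of a suitable quadratic form between a spectral lower bound and a combinatorial upper bound. Let $S:=\{x\in V : f_k(x)\neq 0\}$, $z:=|V\setminus S|$, and set
\[
M:=D(f_k)(L-\lambda_k I)D(f_k), \qquad Q(f):=\langle f,Mf\rangle.
\]
Since $M_{xy}=f_k(x)(L-\lambda_k I)_{xy}f_k(y)$, the matrix $M$ is supported on the $S\times S$ principal block, and on that block it is a congruence of $(L-\lambda_k I)_{S,S}$ via the invertible diagonal matrix $D_S(f_k)$. By Sylvester's law of inertia, $M_S$ and $(L-\lambda_k I)_{S,S}$ therefore have identical signatures.

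For the spectral lower bound, note that $L-\lambda_k I$ has $k-1$ negative, $r$ zero, and $n-k-r+1$ positive eigenvalues. Cauchy interlacing between $L-\lambda_k I$ and its principal submatrix $(L-\lambda_k I)_{S,S}$ (obtained by removing the $z$ rows and columns indexed by $V\setminus S$) yields
\[
n_-\bigl((L-\lambda_k I)_{S,S}\bigr)+n_0\bigl((L-\lambda_k I)_{S,S}\bigr)\;\geq\; k+r-1-z,
\]
and after accounting for the trivial $z$-dimensional kernel of $M$ coming from $V\setminus S$, I would conclude that $n_-(M)+n_0(M)\geq k+r-1$. By the standard consequence of Sylvester's theorem there exists a subspace $\mathcal{N}\subseteq \mathbb{R}^V$ of dimension at least $k+r-1$ on which $Q(f)\leq 0$.

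For the combinatorial upper bound on $\dim \mathcal{N}$, I would exploit the explicit edge-by-edge expansion from Corollary~\ref{4.3},
\[
Q(f)=\sum_{x,y\in e\in E}\deg(x)(-L_{xy})f_k(x)f_k(y)\bigl(f(x)-f(y)\bigr)^2,
\]
whose summands are non-negative precisely when $\{x,y\}$ is an $S$-pair (so that $f_k(x)\operatorname{sgn}(e)f_k(y)>0$ along some edge $e$) and non-positive when $x$ and $y$ belong to distinct strong nodal domains; pairs with $f_k(x)=0$ or $f_k(y)=0$ contribute nothing. Hence $Q(f)\leq 0$ forces every positive summand to vanish, which rigidifies $f$ along the edges of the $S$-subhypergraph $H_1$ of Definition~\ref{5.1}. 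I would then count the available dimensions in this rigidified space: $\mathfrak{S}(f_k)$ independent choices for the common value on each strong nodal domain; $|\mathcal{F}|$ completely free directions at Fiedler zero vertices of Definition~\ref{5.4} (these lie outside the rigidity constraints propagated by $S$-edges, by Corollary~\ref{3.9} and Proposition~\ref{5.3}); a further $+l'$ from cyclic directions contributed by independent cycles of the nonzero subhypergraph $H'$, where the tree-like non-Fiedler zeros are absorbed into the incident nodal domains using Proposition~\ref{5.3}(ii); and a compensating $-l_+$ because each independent $S$-cycle produces one extra linear relation among the nodal-domain constants. The upshot is
\[
\dim \mathcal{N}\;\leq\; \mathfrak{S}(f_k)+|\mathcal{F}|+l'-l_+.
\]

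Combining the two estimates, $k+r-1\leq \dim\mathcal{N}\leq \mathfrak{S}(f_k)+|\mathcal{F}|+l'-l_+$, rearranges to the desired conclusion. The main obstacle is the combinatorial step: one must produce a clean basis of $\{f:Q(f)\leq 0\}$ whose cardinality is exactly $\mathfrak{S}(f_k)+|\mathcal{F}|+l'-l_+$. This requires carefully separating cycles contained entirely inside $H_1$ ($S$-cycles, contributing $-l_+$) from cycles of $H'$ that cross edges between distinct nodal domains (contributing $+l'$), and correctly propagating the rigidity across tree-like zero vertices without over- or under-counting. The presence of genuinely signed incidences, which may allow a single hyperedge to produce $S$-compatible and $S$-incompatible pairs simultaneously, makes this bookkeeping more delicate than in the unsigned graph case of Davies--Gladwell--Leydold--Stadler, and is where the work specific to signed hypergraphs will be concentrated.
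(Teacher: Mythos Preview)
Your overall architecture is sound---bound the non-positive inertia of $M=D(f_k)(L-\lambda_k I)D(f_k)$ from below spectrally and from above combinatorially---and the spectral half is essentially correct once one remembers that $L$ is only self-adjoint in the degree-weighted inner product (so Cauchy interlacing must be applied to the symmetrized matrix $D^{1/2}LD^{-1/2}$, or one invokes the hypergraph interlacing of Lemma~\ref{5.9}).

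The combinatorial half, however, rests on a false premise. You assert that ``$Q(f)\le 0$ forces every positive summand to vanish, which rigidifies $f$ along the edges of $H_1$.'' This is not true: $Q(f)\le 0$ only says the total is non-positive, and on a maximal non-positive subspace individual positive summands can certainly survive, compensated by negative ones. (A toy model: $Q(f)=(f_1-f_2)^2-2(f_2-f_3)^2$ has $Q(1,0,1)=-1<0$ with the first summand equal to $1$.) Consequently the ``rigidified space'' you describe is not a container for $\mathcal{N}$, and the dimension count $\dim\mathcal{N}\le \mathfrak{S}(f_k)+|\mathcal{F}|+l'-l_+$ is unjustified. Relatedly, $\{f:Q(f)\le 0\}$ is a cone, not a subspace, so speaking of a ``basis'' for it is ill-posed.

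The paper obtains the combinatorial bound by a different mechanism. It first treats the nowhere-zero case via a Fiedler-style rank/linear-independence argument (Lemma~\ref{5.10}): writing $Q$ as $\sum a_{ij}(g_i-g_j)^2$, one compares the rank of the positive-edge constraints against that of all edge constraints to pin down $p_M$, yielding $\sum_{e\in E(T)}(|e|-1)\le p_M+l-l_+$ and hence $\mathfrak{S}(f_k)\ge k+r-1-l+l_+$. Tree-like zeros with a nonzero neighbour are then peeled off one at a time using Fiedler's inertia lemma (Lemma~\ref{5.7}), which shows each such deletion drops $p$ by exactly one; finally the genuine Fiedler zeros $\mathcal{F}$ are removed via interlacing (Lemma~\ref{5.9}), costing $|\mathcal{F}|$. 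If you want to salvage your single-shot approach, the right move is not to constrain vectors in $\mathcal{N}$ but to \emph{construct} a subspace of dimension $n-\mathfrak{S}(f_k)-|\mathcal{F}|-l'+l_+$ on which $Q$ is positive definite; that construction is precisely what the contradiction argument in Lemma~\ref{5.10} encodes.
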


Let us prepare five Lemmas. We first recall the following result of Fiedler.
\begin{lemma}[\cite{Fiedler1975}]\label{5.7}
Let
\[
A=\left(
  \begin{array}{cc}
    B & a \\
    a^{T} & b \\
  \end{array}
\right)
\]
be an $n\times n$ partitioned symmetric matrix where $b\in \mathbb{R}$. If there exists $u\in \mathbb{R}_{n-1}$ such that $Bu=0$ and $a^{T}\neq 0$. Then we have
\begin{equation*}
  p_{A}=p_{B}+1,
\end{equation*}
where $p_{A}$ and $p_{B}$ are the positive indices of inertia of $A$ and $B$, respectively.
\end{lemma}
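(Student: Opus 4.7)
The plan is to work with the symmetric avatar $\tilde{L} := D - A$ (so $L = D^{-1}\tilde{L}$) and to bracket the positive inertia index of the pinched matrix
\[
\tilde{M} := D(f_k)(\tilde{L} - \lambda_k D)D(f_k)
\]
from above (spectrally) and from below (combinatorially). By Sylvester's law of inertia (applied via the congruence $C = D^{-1/2}$, which reduces the generalized eigenvalue problem to the ordinary one for $D^{-1/2}\tilde{L} D^{-1/2}$), the symmetric matrix $\tilde{B} := \tilde{L} - \lambda_k D$ has inertia $(n-k-r+1,\,k-1,\,r)$. By Corollary \ref{4.3},
\[
a^{\top}\tilde{M}a = \sum_{x,y\in e\in E}deg(x)(-L_{xy})f_k(x)f_k(y)(a(x)-a(y))^{2},
\]
and since $D(f_k)$ is invertible on the support $\Omega := \{x: f_k(x)\neq 0\}$ and zero elsewhere, $p_+(\tilde{M}) = p_+(\tilde{B}|_\Omega)$.

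For the spectral upper bound I delete the zero vertices of $f_k$ one at a time. For each tree-like zero $v\in\mathcal{F}^c$, I apply Fiedler's Lemma \ref{5.7}: the obvious kernel witness $u = f_k$ satisfies $a^{\top}u = 0$ since the eigen-equation at $v$ forces $\sum_w A_{vw}f_k(w) = 0$, so instead I use that the tree-like property of $v$ (Definition \ref{5.2}, Proposition \ref{5.3}) disconnects its neighbourhood into $d_v$ pieces, and pick $u$ supported on a single piece so that the $v$-column pairs with $u$ nontrivially. Thus each such deletion decreases $p_+$ by exactly one, while deletions within $\mathcal{F}$ cost at most one each by Cauchy interlacing. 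Processing $\mathcal{F}^c$ first yields
\[
p_+(\tilde{B}|_\Omega) \leq (n-k-r+1) - |\mathcal{F}^c|.
\]

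For the combinatorial lower bound I decompose $\tilde{M} = M_+ - M_-$, where $M_+$ (resp. $M_-$) collects the pair-contributions with $A_{xy}f_k(x)f_k(y) > 0$ (resp. $< 0$). Both are positive semidefinite ``edge-Laplacian''-type matrices (pairs through a zero vertex contribute nothing). The kernel of $M_+$ is spanned by indicators of connected components of the S-sub-hypergraph $H_1$ of Definition \ref{5.1}, so $\operatorname{rank}(M_+) = n - c(H_1) = |\Omega| - \mathfrak{S}(f_k)$. A parallel computation for the induced sub-hypergraph $H'$ on $\Omega$ (whose associated form is $M_+ + M_-$ with the positive weights $|A_{xy}f_k(x)f_k(y)|$) yields $\operatorname{rank}(M_++M_-) = |\Omega| - c(H')$, and the cyclomatic identities translate the difference $l'-l_+$ into the ``inter-domain cycle'' count. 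A sharpened Weyl-type inertia inequality---exhibiting an $l_+$-dimensional subspace of $\ker M_-$ on which $M_+$ remains strictly positive, so that the positive directions of $M_+$ beyond this subspace survive the subtraction of $M_-$---delivers
\[
p_+(\tilde{M}) \geq \operatorname{rank}(M_+) - (l'-l_+) = |\Omega| - \mathfrak{S}(f_k) - (l'-l_+).
\]

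Equating the two bounds on $p_+(\tilde{B}|_\Omega) = p_+(\tilde{M})$ and substituting $|\mathcal{F}^c| = (n-|\Omega|) - |\mathcal{F}|$ rearranges to $\mathfrak{S}(f_k) \geq k + r - 1 - l' + l_+ - |\mathcal{F}|$, as desired. The \emph{main obstacle} is the sharpened Weyl inequality in the lower-bound step: the crude inequality $p_+(M_+ - M_-) \geq p_+(M_+) - \operatorname{rank}(M_-)$ is too weak because $\operatorname{rank}(M_-)$ typically exceeds $l'-l_+$, so one must canonically identify the extra positive directions coming from the S-cycle space that are not cancelled by the inter-domain perturbation. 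A secondary delicacy is the construction of the Fiedler witness at each tree-like zero, since the natural eigenvector-based candidate $u = f_k$ is killed by the eigen-equation and must be replaced by a locally supported kernel element built from the tree-like structure at $v$.
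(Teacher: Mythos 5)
You have not proved the statement in question. The statement is Lemma \ref{5.7}, Fiedler's bordering lemma: a purely linear-algebraic fact about the inertia of a symmetric matrix $A=\left(\begin{smallmatrix} B & a \\ a^{T} & b \end{smallmatrix}\right)$ under the hypothesis that there is a $u$ with $Bu=0$ and $a^{T}u\neq 0$ (note the paper's ``$a^{T}\neq 0$'' is a typo for $a^{T}u\neq 0$; that is how the lemma is actually used in the proof of Lemma \ref{5.12}, where $\eta^{T}u=L_{nj}f_k(j)\neq 0$). Your proposal is instead an outline of Theorem \ref{5.6}, the lower bound $\mathfrak{S}(f_k)\geq k+r-1-l^{\prime}+l_{+}-|\mathcal{F}|$ --- and, worse, it explicitly \emph{invokes} Lemma \ref{5.7} in the vertex-deletion step (``I apply Fiedler's Lemma \ref{5.7}\dots''). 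As a proof of Lemma \ref{5.7} this is circular; as a proof of Theorem \ref{5.6} it is answering a different question, and it also departs from the paper's route there (the paper does not use your $M_{+}-M_{-}$ decomposition or a ``sharpened Weyl inequality'' --- it uses Lemma \ref{5.10} on quadratic forms in differences $x_i-x_j$, Lemma \ref{3.5}, and the interlacing Lemma \ref{5.9}), with the key lower-bound step left as an acknowledged obstacle rather than carried out.

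For the record, the paper itself gives no proof of Lemma \ref{5.7}; it is quoted from Fiedler's 1975 paper. If you want to supply one, the standard two-line argument is: Cauchy interlacing for the principal submatrix $B$ gives $p_A\leq p_B+1$; for the reverse inequality, let $P\subseteq\mathbb{R}^{n-1}$ be a subspace of dimension $p_B$ on which $B$ is positive definite, embed it as $P\times\{0\}$, and adjoin the vector $v_t=(u,0)+te_n$ with $t$ small and $\operatorname{sgn}(t)=\operatorname{sgn}(a^{T}u)$. Then $v_t^{T}Av_t=2t\,a^{T}u+t^{2}b>0$, the cross terms with $P\times\{0\}$ are $O(t)$, and a Schur-complement estimate shows $A$ is positive definite on the $(p_B+1)$-dimensional space $P\times\{0\}\oplus\operatorname{span}\{v_t\}$ for $|t|$ small, whence $p_A\geq p_B+1$. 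None of this machinery appears in your proposal, so the target statement remains unproved.
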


\begin{lemma}\label{5.8}
Let $T=(V(T), E(T))$ be a supertree with $V(T)=\{1, 2, \cdots, n\}, E(T)=\{e_1, e_2, \cdots, e_m\}$. Then the $\sum_{i\in [m]}(|e_i|-1)$ linear forms $x_i-x_j$, where $i, j\in e \in E(T)$, and we choose two vertices for each edge $e\in E(T)$, are linearly independent.
\end{lemma}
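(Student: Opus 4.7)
The plan is to prove Lemma \ref{5.8} by induction on the number of vertices $n = |V(T)|$, exploiting the existence of a pendant vertex (a vertex of degree $1$) in every supertree.

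First I would establish that every supertree $T$ with $n \geq 2$ contains a pendant vertex. If every vertex had degree $\geq 2$, then $\sum_{e \in E(T)} |e| = \sum_{v \in V(T)} d_v \geq 2n$. Combined with the supertree identity $\sum_{e \in E(T)}(|e|-1) = n-1$ recalled in the proposition just above (so that $\sum_e |e| = n-1+m$), this would force $m \geq n+1$; but then $\sum_e (|e|-1) \geq m \geq n+1 > n-1$ since every edge satisfies $|e| \geq 2$, a contradiction.

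Now fix a pendant vertex $v$ and let $e$ be the unique edge containing $v$. Using the freedom to select the $|e|-1$ basis forms inside $e$, I would choose them as
\[
x_v - x_u,\quad x_{u_1}-x_{u_2},\ x_{u_2}-x_{u_3},\ \ldots,\ x_{u_{|e|-2}}-x_{u_{|e|-1}},
\]
where $u,u_1,\ldots,u_{|e|-1}$ is an enumeration of $e \setminus \{v\}$ with $u_1 = u$. These are linearly independent inside the differences-on-$e$ space, exactly one of them involves $x_v$, and the remaining $|e|-2$ forms involve only variables indexed by $V \setminus \{v\}$.

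Consider now the weak vertex deletion $T' = T - v$ from Definition \ref{5.5}: when $|e| \geq 3$ its edge set is $(E(T)\setminus\{e\}) \cup \{e \setminus \{v\}\}$, and when $|e| = 2$ the edge $e$ disappears. In either case $T'$ is connected (since $v$ had degree $1$) and acyclic (any cycle in $T'$ would lift to a cycle in $T$, as $v$ does not appear), with $\sum_{e' \in E(T')}(|e'|-1) = (n-1)-1$, so $T'$ is a supertree on $n-1$ vertices. By induction the $n-2$ remaining chosen forms, viewed as the chosen forms for $T'$, are linearly independent in $(\mathbb{R}^{V(T')})^*$, hence in $(\mathbb{R}^{V(T)})^*$. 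Adding the unique form $x_v - x_u$ in which $x_v$ appears preserves independence, giving the desired $n-1$ linearly independent forms.

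The main obstacle is bookkeeping rather than conceptual: one must carefully check that the weak vertex deletion of a pendant vertex genuinely produces a supertree (connectedness, acyclicity, and the drop by exactly $1$ in $\sum_{e'}(|e'|-1)$), and that the basis of $|e|-1$ forms for the affected edge can always be arranged so that $x_v$ appears in exactly one of them. Once these points are verified, the induction closes cleanly.
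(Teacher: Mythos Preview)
Your argument is correct. Both your proof and the paper's hinge on the existence of a pendant vertex in a supertree, but you deploy it differently. The paper argues by contradiction: assuming a nontrivial relation $\sum a_{ik}(x_i-x_k)=0$, it passes to the sub-hypergraph $T_1$ of edges carrying a nonzero coefficient, finds a pendant vertex there, and observes that the corresponding variable occurs in a single surviving term, forcing its coefficient to vanish. You instead run a forward induction: locate a pendant vertex $v$ in $T$ itself, arrange the $|e|-1$ chosen forms on the unique incident edge so that $x_v$ occurs in exactly one of them, weak-delete $v$ to obtain a supertree on $n-1$ vertices, and invoke the induction hypothesis on the remaining $n-2$ forms. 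Your route is more explicit about the bookkeeping (in particular, about why the weak deletion preserves the supertree property and how the per-edge choice of forms can be arranged), while the paper's contradiction is terser but leaves implicit why the pendant vertex of $T_1$ meets only one chosen form from its unique edge. One small remark: your proof, as written, establishes independence for a \emph{particular} choice of $|e|-1$ forms per edge; since any two such choices span the same $(|e|-1)$-dimensional space of differences on $e$, independence for one choice immediately yields it for all, so this covers the lemma in full generality.
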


\begin{proof}
Assume
\[\sum_{i,k\in e \in E(T)} a_{ik}(x_i-x_k)=0.\]
where not all $a_{ik}$ are equal to zero. Let $T_1$ be the subhypergraph of $T$ with the set of vertices $V(T_1)=\{1, \cdots, n\}$ and the set of edges $E(T_1)=\{e : i, k\in e \ and \ a_{ik}\neq 0\}$.
Since a supertree is a connected hypergraph not containing any cycle, then $T_1$ has at least one edge and each of its component is a supertree. We also know that every supertree with $n\geq 2$ vertices has at least one end-vertex (i.e. a vertex adjacent to a single edge), thus the component with at least one edge in $T_1$ has at least one end-vertex.  If this vertex is $n$ and the single adjacent edge $e_1$ such that $n-1, n\in e_1$, then $x_n$ is contained in the sum above in the only term $a_{n-1,n}(x_{n-1}-x_{n})$, where $a_{n-1,n}=0$. in contradiction with the assumption that this sum is zero.
\end{proof}

\begin{lemma}[\cite{Mulas2021}]\label{5.9}
If $\hat{\Gamma}$ is obtained from $\Gamma$ by weak-deleting $r$ vertices, then
\begin{equation*}
  \lambda_k(\Gamma) \leq \lambda_k(\hat{\Gamma}) \leq \lambda_{k+r}(\Gamma) \ for \ all \ k\in \{1, \cdots, n-r\}.
\end{equation*}
\end{lemma}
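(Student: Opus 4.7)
The plan is to analyze the inertia of the symmetric matrix $M := D(f_k)(L - \lambda_k I) D(f_k)$. By Corollary \ref{4.3} its quadratic form is
\[
\langle f, Mf\rangle \;=\; \sum_{x,y\in e\in E}\deg(x)(-L_{xy}) f_k(x) f_k(y)\bigl(f(x)-f(y)\bigr)^2,
\]
in which the weight $\deg(x)(-L_{xy})f_k(x)f_k(y)$ is positive when $x,y$ lie in a common S-edge (that is, $e\in E(H_1)$), negative when $x,y\in \Omega := \{v : f_k(v)\neq 0\}$ but $e$ is not S-compatible, and zero whenever $f_k$ vanishes at an endpoint. Writing $Z = V\setminus\Omega$ and $m := \mathfrak{S}(f_k)$, I sandwich the positive inertia $n_+(M)$ between a Sylvester-type upper bound arising from $L-\lambda_k I$ and a geometric lower bound from an explicit positive-definite subspace.

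The upper bound is immediate: $M$ vanishes on $Z$ and equals $D(f_k)|_\Omega(L-\lambda_k I)|_{\Omega\Omega}D(f_k)|_\Omega$ on $\Omega$, where $D(f_k)|_\Omega$ is invertible, so Sylvester's law combined with Cauchy interlacing yields $n_+(M) = n_+((L-\lambda_k I)|_{\Omega\Omega}) \leq n_+(L-\lambda_k I) = n-k-r+1$. The central task is then to construct $W \subset \mathbb{R}^V$ of dimension $n - m - l' + l_+ - |\mathcal{F}|$ on which $\langle f, Mf\rangle > 0$; chaining the two bounds
\[
n - m - l' + l_+ - |\mathcal{F}| \;\leq\; n_+(M) \;\leq\; n-k-r+1
\]
then rearranges exactly to $m \geq k+r-1-l'+l_+-|\mathcal{F}|$.

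The subspace $W$ is assembled from three ingredients. First, for each strong nodal domain $\Omega_i$, the restricted S-hypergraph $H_1|_{\Omega_i}$ is connected, so it has a spanning hyperforest; by Lemma \ref{5.8} its $|\Omega_i|-1$ pair-difference forms $f(x)-f(y)$ (two vertices chosen per hyperedge) are linearly independent, and each is a \emph{positive} contribution to the quadratic form since the underlying edge is S-compatible, giving $|\Omega|-m$ directions in total. Second, each tree-like zero vertex $v\in\mathcal{F}^c$ furnishes one further direction via Lemma \ref{5.7}: by Definition \ref{5.4}, $v$ is tree-like with a non-zero neighbour, and the restriction $f_k|_{V\setminus\{v\}}$ lies in the kernel of the principal submatrix $(L-\lambda_k I)|_{V\setminus\{v\}}$ with non-trivial coupling to the $v$-row of $L-\lambda_k I$ (the tree-like hypothesis ensures $v$ does not sit inside a cycle that forces the coupling to vanish), so Fiedler's lemma increments the positive index by exactly $1$; this adds $|\mathcal{F}^c|$ directions. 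Third, the cycle space of $H'$ has dimension $l'$, of which an $l_+$-dimensional sub-space consists of S-cycles: the S-cycle relations are consistent with the sign pattern forced on $W$ by S-compatibility and do not eliminate positive directions, whereas each of the remaining $l'-l_+$ generators introduces one linear relation that kills one positive direction. The net count is $(|\Omega|-m)+|\mathcal{F}^c|-(l'-l_+) = n-m-l'+l_+-|\mathcal{F}|$, using $|\mathcal{F}^c|=|Z|-|\mathcal{F}|$.

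The main obstacle is the cycle bookkeeping in the third ingredient: one must show that the cycle space of $H'$ genuinely splits as the S-cycle space (whose $l_+$ relations are compatible with positivity of $Q$) plus a complementary $(l'-l_+)$-dimensional subspace each of whose generators forces exactly one dependence among the constructed positive directions, and that what remains is positive \emph{definite} rather than merely semi-definite. The verification will apply Lemma \ref{5.8} to a spanning hyperforest of $H'$ (not only of $H_1$), propagate the $f_k\,\mathrm{sgn}(e)$ compatibility of Definition \ref{3.1} around each cycle, and invoke Proposition \ref{5.3} to treat the tree-like vertices inductively so that the Fiedler directions coming from $\mathcal{F}^c$ remain linearly independent of the nodal-domain directions from the first ingredient.
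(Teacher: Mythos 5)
You have proved the wrong statement. Lemma \ref{5.9} is an eigenvalue interlacing inequality: it compares the spectrum of the normalized Laplacian of $\Gamma$ with that of $\hat{\Gamma}$ obtained by weak-deleting $r$ vertices (Definition \ref{5.5}), asserting $\lambda_k(\Gamma) \leq \lambda_k(\hat{\Gamma}) \leq \lambda_{k+r}(\Gamma)$. Your proposal never engages with this: nowhere do you mention $\hat{\Gamma}$, weak vertex deletion, or a comparison between two spectra. Instead you sketch an inertia computation for $D(f_k)(L-\lambda_k I)D(f_k)$ culminating in $\mathfrak{S}(f_k)\geq k+r-1-l'+l_{+}-|\mathcal{F}|$ --- that is Theorem \ref{5.6}, not Lemma \ref{5.9}. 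Worse, the relationship runs the wrong way: in the paper, Lemma \ref{5.9} is an ingredient \emph{in} the proof of Theorem \ref{5.6} (it supplies the step $\tilde{k}+\tilde{r}+|\mathcal{F}|\geq k+r$ when passing from the induced subhypergraph $\tilde{H}$ on $V\setminus\mathcal{F}$ back to $H$), so offering a sketch of Theorem \ref{5.6} in place of a proof of the lemma is circular relative to the paper's logical structure. Note also that the paper itself gives no proof of Lemma \ref{5.9}; it is quoted from Mulas and Zhang \cite{Mulas2021}, so your job was to reconstruct that external argument.

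What a correct proof looks like is a short Courant--Fischer argument, and none of your machinery ($S$-cycles, Fiedler zero sets, Lemma \ref{5.7}) is relevant to it. Weak deletion replaces each edge $h$ by $h\setminus\{\hat{v}\}$ while keeping the edge itself, so the degree of every surviving vertex is unchanged. Hence if $f\in C(\hat{V})$ and $\tilde{f}\in C(V)$ denotes its extension by zero on the $r$ deleted vertices, both the Rayleigh numerator $\langle \tilde{f}, L\tilde{f}\rangle$ and the norm $\langle \tilde{f},\tilde{f}\rangle$ computed in $\Gamma$ coincide with the corresponding quantities for $f$ in $\hat{\Gamma}$: the deleted vertices contribute zero to each edge term and to the degree-weighted scalar product. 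The Rayleigh quotient of $L(\hat{\Gamma})$ is therefore the restriction of that of $L(\Gamma)$ to the codimension-$r$ subspace $U=\{g\in C(V): g\equiv 0 \text{ on the deleted vertices}\}$. Applying the min-max characterization (Theorem \ref{2.9}) with test subspaces taken inside $U$ gives $\lambda_k(\Gamma)\leq\lambda_k(\hat{\Gamma})$, and intersecting arbitrary $(k+r)$-dimensional subspaces with $U$ (which loses at most $r$ dimensions) gives $\lambda_k(\hat{\Gamma})\leq\lambda_{k+r}(\Gamma)$. If you wish to salvage your sketch as an attempt at Theorem \ref{5.6}, be aware that its central cycle-bookkeeping step is exactly the part you concede is unverified, whereas the paper resolves it through the chain Lemma \ref{5.10} (inertia bounds via Lemma \ref{5.8}), Lemma \ref{5.11} (no zeros), Lemma \ref{5.12} (zeros only in $\mathcal{F}^c$, via Fiedler's Lemma \ref{5.7}), and finally Lemma \ref{5.9} itself.
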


\begin{lemma}\label{5.10}
Consider a quadratic form
\begin{equation*}
  B=\sum^n_{i,j=1} a_{ij} (x_i-x_j)^2, \ where \ a_{ji}=a_{ij}\in \mathbb{R}.
\end{equation*}
Let $H=(V(H), E(H))$ be the hypergraph with $V=\{1,2,\cdots, n\}$ and $E=\{e: i,j \in e, a_{ij}\neq 0 \ and \ i\neq j\}.$ Let $H^{\prime}=(V, E(H^{\prime}))$ with $E(H^{\prime})=\{e: i,j \in e, a_{ij}> 0\}$ be a subhypergraph of $H$. For any spanning hyperforest $T$ of $H^{\prime}$, we have
\begin{equation}\label{6.1}
  p\leq \sum_{e\in E(T)}(|e|-1) \leq \sum_{e\in E(H^{\prime})}(|e|-1) \leq p+l,
\end{equation}
where $p$ is the positive index of inertia of $B$, and $l=l(H)$.
\end{lemma}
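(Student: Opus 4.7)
The plan is to prove the three inequalities in (\ref{6.1}) separately. The middle one $\sum_{e \in E(T)}(|e|-1) \leq \sum_{e \in E(H')}(|e|-1)$ is immediate from $E(T) \subseteq E(H')$.

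For the left inequality $p \leq \sum_{e \in E(T)}(|e|-1)$, I would consider the subspace $V_T := \{x \in \mathbb{R}^n : x_i = x_j \text{ for all } i,j \in e \in E(T)\}$. Applying Lemma \ref{5.8} to each connected component of the hyperforest $T$ shows that the defining linear forms of $V_T$ have rank $\sum_{e \in E(T)}(|e|-1)$, so $\dim V_T = n - \sum_{e \in E(T)}(|e|-1)$. Because $T$ spans $H'$, every pair $(i,j)$ with $a_{ij} > 0$ lies in a common edge of $H'$ and hence in the same component of $T$, forcing $x_i = x_j$ on $V_T$. Consequently every positive summand of $B$ vanishes on $V_T$ and $B|_{V_T} \leq 0$, which by Sylvester's law of inertia yields $p \leq n - \dim V_T = \sum_{e \in E(T)}(|e|-1)$.

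For the right inequality $\sum_{e \in E(H')}(|e|-1) \leq p + l$, I would induct on $|E(H')|$; the base case $|E(H')|=0$ is trivial. For the inductive step, pick any $f \in E(H')$ and set $\hat{H} := H \setminus \{f\}$, $\hat{H}' := H' \setminus \{f\}$; let $\hat{B}, \hat{p}, \hat{l}$ denote the associated form, positive index, and cyclomatic number. A direct count gives $\hat{l} = l - (|f|-1) + (c(\hat{H}) - c(H))$, so the induction hypothesis $\sum_{e \in E(\hat{H}')}(|e|-1) \leq \hat{p} + \hat{l}$ reduces the goal to proving
\[
p \geq \hat{p} + (c(\hat{H}) - c(H)).
\]

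The main obstacle is this last inequality, which I prove by exhibiting a $B$-positive-definite subspace of dimension $\hat{p} + (c(\hat{H}) - c(H))$. Write $B = \hat{B} + B_f$ with $B_f := \sum_{i,j \in f} a_{ij}(x_i-x_j)^2$ positive semidefinite of rank $|f|-1$. Let $V_1$ be the space of functions constant on components of $\hat{H}$ (dimension $c(\hat{H})$) and let $V_0 \subseteq V_1$ be the space of functions constant on components of $H$ (dimension $c(H)$); since $\hat{H}$-components refine $H$-components one checks that $V_1 \subseteq \ker \hat{M}$ and $V_0 = V_1 \cap \ker B_f$. Pick a complement $U$ of $V_0$ inside $V_1$, so $\dim U = c(\hat{H}) - c(H)$ and $B_f(u) > 0$ for every nonzero $u \in U$. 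If $V_+$ is a maximal $\hat{B}$-positive-definite subspace, then $V_+ \cap \ker \hat{M} = \{0\}$, so $V_+ \oplus U$ is a direct sum. For any nonzero $v = v_+ + u$ in this sum, the $\hat{B}$-orthogonality of $\ker \hat{M}$ to $\mathbb{R}^n$ gives $\hat{B}(v) = \hat{B}(v_+)$, hence $B(v) = \hat{B}(v_+) + B_f(v)$; this is strictly positive since $\hat{B}(v_+) > 0$ when $v_+ \neq 0$ and $B_f(v) = B_f(u) > 0$ when $v_+ = 0$ and $u \neq 0$. Therefore $p \geq \dim(V_+ \oplus U) = \hat{p} + (c(\hat{H}) - c(H))$, completing the induction.
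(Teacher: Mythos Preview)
Your argument is correct. For the left inequality you are doing essentially what the paper does: on the subspace cut out by the spanning hyperforest constraints all positive summands of $B$ vanish, so $B$ is non-positive there, and a dimension count gives $p\le \sum_{e\in E(T)}(|e|-1)$. The paper packages this via Sylvester's normal form and a rank comparison between two linear systems, but the underlying idea is identical.

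For the right inequality your approach is genuinely different from the paper's. The paper again uses the Sylvester decomposition $B=\sum_{i=1}^{n-r} b_iY_i^2$ and argues by contradiction in one shot: assuming $\sum_{e\in E(H')}(|e|-1)>p+l$, it compares the ranks of the system $\{Y_1=\cdots=Y_p=0,\ x_i-x_j=0\ \text{for}\ \{i,j\}\subseteq e\in E(H)\setminus E(H')\}$ and of the full system $\{x_i-x_j=0\ \text{for}\ \{i,j\}\subseteq e\in E(H)\}$, produces a vector satisfying the first but not the second, and derives $B\le 0$ and $B>0$ simultaneously. Your proof instead inducts on $|E(H')|$, peeling off one positive edge $f$ at a time and proving the inertia inequality $p\ge \hat p + (c(\hat H)-c(H))$ by explicitly building a $B$-positive subspace $V_+\oplus U$. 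Both are valid; the paper's argument is shorter and treats the two inequalities symmetrically, while yours is more constructive and avoids ever writing down the Sylvester normal form. One small remark: you introduce $\hat M$ without defining it; presumably it is the symmetric matrix of $\hat B$, and indeed $V_1\subseteq\ker\hat M$ since functions constant on components of $\hat H$ annihilate every difference $x_i-x_j$ appearing in $\hat B$.
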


\begin{proof}
Let the rank of $B$ be $n-r$. By Sylvester's law of inertia, the quadratic form $B$ can be reformulated as
\begin{equation}\label{}
  B=\sum^{n-r}_{i=1} b_i Y_i^2,
\end{equation}
where $Y_i=\sum^n_{j=1} m_{ij} x_j, i=1,\cdots, n$ are independent linear forms and
\begin{equation*}
  b_1>0, \cdots , b_p>0, b_{p+1}<0, \cdots , b_{n-r}<0.
\end{equation*}
We argue by contradiction.

Suppose that $\sum_{e\in E(H^{\prime})}(|e|-1)>p+l$. We consider the following two systems of linear equations
\begin{equation}\label{}
  Y_1=0, \cdots , Y_p=0, x_i-x_j=0, \ for \ any \ \{i, j\}\in e \in E(H)\setminus E(H^{\prime}),
\end{equation}
and
\begin{equation}\label{}
  x_i-x_j=0, \ for \ any \ \{i, j\}\in e \in E(H).
\end{equation}
Let $c$ be the number of connected components of $H$. By our assumption $\sum_{e\in E(H^{\prime})}(|e|-1)>p+l$, we observe that
\begin{equation*}
  the \ rank \ of \ (14) \leq p+\sum_{e\in E(H)}(|e|-1)-\sum_{e\in E(H^{\prime})}(|e|-1) \leq p+n-c+l-p-l<n-c
\end{equation*}
and, by Lemma \ref{5.8}, the rank of (15) is $\sum_{e\in E(H)}(|e|-1)=n-c$. Hence, there exists a nonzero solution $(x^0_1, \cdots, x^0_n)$ of (14) which fails (15). Then we derive $B(x^0_1, \cdots, x^0_n) \leq 0$ from (14) and $B(x^0_1, \cdots, x^0_n)>0$ from (15), which is a contradiction. This shows $|E(H)| \leq p+l$.

Suppose that $\sum_{e\in E(T)}(|e|-1)<p$. Let us consider the following two systems of linear equations
\begin{equation}\label{}
  x_i-x_j=0, \ for \ any \ \{i, j\}\in e\in E(T), Y_{p+1}=0, \cdots, Y_{n-r}=0
\end{equation}
and
\begin{equation}\label{}
  Y_1=0, \cdots, Y_{n-r}=0.
\end{equation}
Now we compare the ranks of the two systems. By Lemma \ref{5.8} and our assumption that $\sum_{e\in E(T)}(|e|-1)<p$, we estimate by Lemma \ref{5.8}
\[the \ rank \ of \ (16) \leq \sum_{e\in E(T)}(|e|-1)+n-r-p<n-r.\]
On the other hand, the rank of (17) is $n-r$. Therefore, there exists a nonzero solution $(x^0_1, \cdots, x^0_n)$ of (16) which fails (17). Then we derive $B(x^0_1, \cdots, x^0_n)\leq 0$ from (16) and $B(x^0_1, \cdots, x^0_n)>0$ from (17), which is a contradiction. This shows $|E(T)|\geq p$.
\end{proof}

For the number of strong nodal domains $\mathfrak{S}(f)$ of a function $f$, we have the following observation.

\begin{lemma} \label{3.5}
Let $\Gamma=(H, \sigma)$ be a signed hypergraph where $H=(V, E)$ and $f: V\rightarrow \mathbb{R}$ be a function. Let $\Omega=\{v\in V: f(v)\neq 0\}$ be the support set of $f$. Consider the subhypergraph $S=(\Omega, E(S))$ where

$E(S) : =\{e : x_j, x_{j+1} \in e=\{x_i, x_{i+1}, \cdots, x_{i+l-1}\} \in E, f(x_j)sgn(e)f(x_{j+1})>0$ for each $j=i, i+1, \cdots, i+l-2.\}$

Let $T_S=(\Omega, E(T_S))$ be the spanning hyperforest of $S$. Then we have
\begin{equation*}
  \mathfrak{S}(f)=|V|-z-\sum_{e\in E(T_S)}(|e|-1),
\end{equation*}
where $z=|V\setminus \Omega|$ is the number of zeros of $f$.
\end{lemma}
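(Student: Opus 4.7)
The plan is to recognize the claimed identity as the standard vertex-edge count for hyperforests, once we identify $\mathfrak{S}(f)$ with the number of connected components of the auxiliary subhypergraph $S$. Since $|V|-z=|\Omega|$, the target formula rewrites as $\mathfrak{S}(f)=|\Omega|-\sum_{e\in E(T_S)}(|e|-1)$, and the right-hand side is exactly the number of connected components of a hyperforest with vertex set $\Omega$ and edge set $E(T_S)$ (by the connected acyclic count stated just before the theorem, applied to each component).

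First I would verify the equivalence of equivalence classes and components. By Definition~\ref{3.1}, a sequence $\{x_j\}_{j=1}^{\ell}\subset\Omega$ is an $S$-path of $f$ precisely when each consecutive pair $x_j,x_{j+1}$ lies in a common edge $e$ with $f(x_j)\,\mathrm{sgn}(e)\,f(x_{j+1})>0$, i.e.\ precisely when this edge lies in $E(S)$. Therefore the relation $R_S$ on $\Omega$ defined in Definition~\ref{3.3}(i) coincides with the ``connected by a path'' relation in the subhypergraph $S=(\Omega,E(S))$. Consequently the equivalence classes $\{S_i\}_{i=1}^{p}$ of $R_S$ are exactly the connected components of $S$, which gives
\[
\mathfrak{S}(f)=c(S).
\]

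Second I would invoke the hyperforest count. Since $T_S=(\Omega,E(T_S))$ is a spanning hyperforest of $S$, it has the same vertex set $\Omega$ and the same connected components as $S$, so $c(T_S)=c(S)$. Applying the characterisation of acyclic connected hypergraphs (Proposition cited just before Theorem~\ref{5.6}) to each connected component of $T_S$ separately and summing over components, one obtains
\[
\sum_{e\in E(T_S)}(|e|-1)=|\Omega|-c(T_S).
\]
Combining with $c(T_S)=c(S)=\mathfrak{S}(f)$ and $|\Omega|=|V|-z$, we conclude
\[
\mathfrak{S}(f)=|V|-z-\sum_{e\in E(T_S)}(|e|-1),
\]
as desired.

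The proof is essentially bookkeeping; there is no real obstacle. The only subtle point to be careful about is the first step: making sure that ``$S$-path in $\Gamma$'' and ``path in the subhypergraph $S$'' really agree, which requires noting that the sign condition in Definition~\ref{3.1} is literally the membership condition in $E(S)$. Once that identification is made, the second step is just the standard observation that every spanning hyperforest on $n$ vertices with $c$ components satisfies $\sum_{e}(|e|-1)=n-c$, applied component by component.
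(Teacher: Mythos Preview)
Your proposal is correct and follows essentially the same approach as the paper: identify $\mathfrak{S}(f)$ with the number of connected components of $S$ (equivalently of its spanning hyperforest $T_S$), then apply the hyperforest count $\sum_{e\in E(T_S)}(|e|-1)=|\Omega|-c(T_S)$ and substitute $|\Omega|=|V|-z$. The paper's own proof is a two-line version of exactly this argument, and your added justification of why $R_S$-equivalence coincides with connectivity in $S$ is a welcome clarification rather than a different route.
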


\begin{proof}
By definition, the number $\mathfrak{S}(f)$ is the number of connected components of the subhypergraph $S$ or that of its spanning hyperforest $T_S$. Therefore, we have
\begin{equation*}
  \mathfrak{S}(f)=|\Omega|-\sum_{e\in E(T_S)}(|e|-1)=|V|-z-\sum_{e\in E(T_S)}(|e|-1).
\end{equation*}
\end{proof}

Now, we are ready for the proof of Theorem \ref{5.6}. First we consider the case that $f_k$ has no zeros.

\begin{lemma}\label{5.11}
Let $L, \Gamma=(H, \sigma)$, and $f_k$ be defined as in the Theorem 5.6. If $f_k$ is non-zero at each vertex, then we have
\begin{equation*}
  \mathfrak{S}(f_k)\geq k+r-1-l+l_{+}.
\end{equation*}
\end{lemma}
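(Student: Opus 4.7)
The strategy is to identify the number of strong nodal domains with a quantity controlled by the positive index of inertia of a suitable quadratic form, and then apply Lemma \ref{5.10}. Concretely, I would set
\begin{equation*}
  B(f) := \langle f, D(f_k)(L-\lambda_k I)D(f_k)f \rangle,
\end{equation*}
and invoke Corollary \ref{4.3} to rewrite
\begin{equation*}
  B(f)=\sum_{x,y\in e\in E} \deg(x)(-L_{xy})f_k(x)f_k(y)(f(x)-f(y))^2.
\end{equation*}
Since $\deg(x)L_{xy}=-A_{xy}=\deg(y)L_{yx}$, the pair coefficients $a_{xy}=\deg(x)(-L_{xy})f_k(x)f_k(y)$ are symmetric, so $B$ is a genuine quadratic form of the type handled by Lemma \ref{5.10}.

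The next step is to compute the positive index of inertia of $B$. Because $f_k$ is nowhere zero, $D(f_k)$ is invertible on $C(V)$, and thus the bilinear form $B$ is congruent (via the linear isomorphism $D(f_k)$) to $\langle \cdot,(L-\lambda_k I)\cdot\rangle$. By Sylvester's law of inertia, $B$ inherits the same positive index. Since the eigenvalues of $L$ are $\lambda_1\le\cdots\le\lambda_{k-1}<\lambda_k=\cdots=\lambda_{k+r-1}<\lambda_{k+r}\le\cdots\le\lambda_n$, exactly $n-k-r+1$ eigenvalues of $L-\lambda_k I$ are positive, so $p_B=n-k-r+1$.

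I would then interpret Lemma \ref{5.10} for our form. A coefficient $a_{xy}$ is positive exactly when $(-L_{xy})f_k(x)f_k(y)>0$, which, as in the proof of Theorem \ref{4.1}, amounts to $f_k(x)\,\mathrm{sgn}(e)\,f_k(y)>0$ for the hyperedge $e$ containing $x,y$. Hence the subhypergraph $H'$ appearing in Lemma \ref{5.10} is precisely the hypergraph $H_1$ of Definition \ref{5.1}, and the ambient hypergraph is the underlying $H$ of $\Gamma$. Lemma \ref{5.10} therefore yields
\begin{equation*}
   \sum_{e\in E(H_1)}(|e|-1)\le p_B+l(H)=n-k-r+1+l.
\end{equation*}

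Finally, I combine this with the counting formula of Lemma \ref{3.5}. Since $f_k$ has no zeros we have $z=0$, so $\mathfrak{S}(f_k)=c(H_1)$. Recalling $l_{+}=\sum_{e\in E(H_1)}(|e|-1)-n+c(H_1)$ from Definition \ref{5.1}, rewrite $\sum_{e\in E(H_1)}(|e|-1)=l_{+}+n-\mathfrak{S}(f_k)$ and substitute into the previous inequality to obtain $\mathfrak{S}(f_k)\ge k+r-1-l+l_{+}$, which is the desired bound. The main obstacle in this plan is the congruence/inertia step: one must work in the degree-weighted inner product in which $L$ is self-adjoint, check that the symmetrization of the coefficients indeed produces a standard symmetric quadratic form with signature preserved under the diagonal twist by $D(f_k)$, and verify that the edge-wise sign condition in Definition \ref{5.1} correctly matches the pair-wise positivity condition required by Lemma \ref{5.10}.
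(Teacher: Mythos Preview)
Your proposal is correct and follows essentially the same route as the paper: define the quadratic form $B=D(f_k)(L-\lambda_k I)D(f_k)$, use Corollary~\ref{4.3} to write it as $\sum a_{xy}(f(x)-f(y))^2$, compute $p_B=n-(k+r-1)$ via Sylvester since $D(f_k)$ is invertible, and feed this into Lemma~\ref{5.10}. The only cosmetic difference is that the paper passes through a spanning hyperforest $T$ of $H_1$ and invokes Lemma~\ref{3.5} to get $\mathfrak{S}(f_k)=n-\sum_{e\in E(T)}(|e|-1)$, then uses $\sum_{e\in E(H_1)}(|e|-1)=\sum_{e\in E(T)}(|e|-1)+l_+$, whereas you short-circuit this by observing directly that $\mathfrak{S}(f_k)=c(H_1)$ and unpacking the definition of $l_+$; these are the same computation.
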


\begin{proof}
Let us denote the vertex set of $H$ by $V=\{1, 2, \cdots, n\}$. Define $D$ to be the diagonal matrix with $D_{ii}=f_k(i)$ for any $i\in V$, and $B : =D(L-\lambda_kI)D$. By Corollary \ref{4.3}, we get for any function $g: V\rightarrow \mathbb{R}$
\begin{equation}\label{}
  <g, Bg>=\sum_{i, j\in e\in E} deg(x)(-L_{ij})f_k(i)f_k(j)(g(i)-g(j))^2=\sum_{i, j\in e\in E} a_{ij}(g(i)-g(j))^2
\end{equation}
where $a_{ij} := deg(x)(-L_{ij})f_k(i)f_k(j)$, which is nonzero if and only if $\{i, j\}\in e\in E$. Next we apply Lemma \ref{5.10} to the quadratic form $B$ and the hypergraph $H$. Let $H^{\prime}$ be the subhypergraph of $H$ defined as in Lemma \ref{5.10} and $T$ be a spanning hyperforest of $H^{\prime}$. We observe that the edge set of $H^{\prime}$ is exactly the set of edges in $H$ which are $S$-paths of $f_k$ on the signed hypergraph $\Gamma=(H, \sigma)$. Then we derive from Lemma \ref{3.5} that
\begin{equation}\label{}
  \mathfrak{S}(f_k)=n-\sum_{e\in E(T)}(|e|-1),
\end{equation}
Since $D$ is nonsingular, the positive index of inertia of $B$ satisfies
\begin{equation*}
  p_B=p_{D(L-\lambda_k I)D}=p_{(L-\lambda_k I)}=n-(k+r-1).
\end{equation*}
Therefore, we obtain by Lemma \ref{5.10}
\begin{equation}\label{}
  n-(k+r-1)\leq \sum_{e\in E(T)}(|e|-1) \leq \sum_{e\in E(H)}(|e|-1) \leq n-(k+r-1)+l.
\end{equation}
Noticing that
\begin{equation*}
  \sum_{e\in E(H)}(|e|-1)=\sum_{e\in E(T)}(|e|-1)+l(H)=\sum_{e\in E(T)}(|e|-1)+l_{+}(H, \sigma, f_k),
\end{equation*}
we derive
\begin{equation}\label{}
  n-(k+r-1)\leq \sum_{e\in E(T)}(|e|-1) \leq n-(k+r-1)+l-l_{+}.
\end{equation}
Inserting (19) into (21) yields
\begin{equation}\label{}
  k+r-1-l+l_{+} \leq \mathfrak{S}(f_k) \leq k+r-1.
\end{equation}
This proves the lemma.
\end{proof}

Next, we consider the case that every zero of $f_k$  is not in the Fiedler zero set.

\begin{lemma}\label{5.12}
Let $L, \Gamma=(H, \sigma)$, and $f_k$ be defined as in the Theorem \ref{5.6}. If all zeros of $f_k$ lie in $\mathcal{F}^c=\mathcal{F}^c(H, f_k)$. Then we have
\[\mathfrak{S}(f_k)\geq k+r-1-l^{\prime}+l_{+}.\]
where $l^{\prime}=l(H^{\prime})$ and $H^{\prime}$ is the induced subhypergraph of $H$ on the set of nonzeros.
\end{lemma}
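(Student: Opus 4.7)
The proof parallels Lemma \ref{5.11}, modified to accommodate the zero vertices of $f_k$, all of which lie in $\mathcal{F}^c$. Let $Z=\{y\in V:f_k(y)=0\}$, $\Omega=V\setminus Z$, and $z=|Z|$. As in the proof of Lemma \ref{5.11}, write $D=D(f_k)$ for the diagonal matrix with $D_{ii}=f_k(i)$ and set $B=D(L-\lambda_k I)D$. By Corollary \ref{4.3}, for any $g:V\to\mathbb{R}$,
\[
\langle g,Bg\rangle=\sum_{i,j\in e\in E}a_{ij}(g(i)-g(j))^2,\qquad a_{ij}:=\deg(i)(-L_{ij})f_k(i)f_k(j),
\]
with $a_{ij}=0$ whenever $i$ or $j$ lies in $Z$. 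Since $D$ vanishes on $Z$, the $Z$-rows and columns of $B$ are zero; letting $\tilde{B}$ denote the $\Omega\times\Omega$ principal block of $B$, one has $p_B=p_{\tilde{B}}$. The restriction of $D$ to $\Omega$ is invertible, so $\tilde{B}$ is congruent to $L|_\Omega-\lambda_k I$, giving $p_{\tilde{B}}=p_{L|_\Omega-\lambda_k I}$.

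The crucial step is to show $p_{L|_\Omega-\lambda_k I}=|\Omega|-(k+r-1)$. Passing to the symmetric conjugate $\mathcal{L}=\Delta^{1/2}L\Delta^{-1/2}$ with $\Delta$ the diagonal degree matrix — so $\mathcal{L}$ has the same spectrum as $L$ and $\mathcal{L}|_\Omega$ is similar to $L|_\Omega$ — it is equivalent to prove the identity for $\mathcal{L}|_\Omega-\lambda_k I$. Starting from $p_{\mathcal{L}-\lambda_k I}=n-(k+r-1)$, enumerate $Z=\{y_1,\ldots,y_z\}$ and remove vertices one at a time. At step $s$, I apply Fiedler's inertia Lemma \ref{5.7} to the current symmetric principal submatrix and the row/column corresponding to $y_{s+1}$, which yields a decrease in the positive index of inertia by exactly one provided the requisite kernel vector exists. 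By Proposition \ref{5.3}(i), $y_{s+1}$ remains tree-like after the prior deletions; and since $y_{s+1}\in\mathcal{F}^c$, it has a non-zero neighbor $w\in\Omega$, which persists throughout. The non-zero neighbor guarantees that the $y_{s+1}$-row of the reduced matrix is non-trivial in the relevant direction, while the tree-like property is used to construct the required kernel element of the further-reduced matrix with non-zero coupling against that row. Iterating gives $p_{\mathcal{L}|_\Omega-\lambda_k I}=n-(k+r-1)-z=|\Omega|-(k+r-1)$.

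Finally, apply Lemma \ref{5.10} to $\tilde{B}$ viewed as a quadratic form on $\mathbb{R}^\Omega$. Its support hypergraph is precisely $H'$, the induced subhypergraph on $\Omega$ (with $l(H')=l'$), and the associated positive sub-hypergraph (where $a_{ij}>0$) is exactly the $S$-path hypergraph of $f_k$, whose cyclomatic number equals $l_+$, since $S$-paths use only non-zero vertices. For any spanning hyperforest $T$ of the positive sub-hypergraph, Lemma \ref{5.10} yields
\[
\sum_{e\in E(T)}(|e|-1)\leq p_{\tilde{B}}+l'-l_+=|\Omega|-(k+r-1)+l'-l_+.
\]
Combining with Lemma \ref{3.5} then gives $\mathfrak{S}(f_k)=|\Omega|-\sum_{e\in E(T)}(|e|-1)\geq k+r-1-l'+l_+$, as required. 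The principal obstacle is the inertia computation in the second paragraph: explicitly constructing the kernel vector demanded by Fiedler's Lemma at each deletion, exploiting both the tree-like property and the guaranteed non-zero neighbor for every $y\in\mathcal{F}^c$.
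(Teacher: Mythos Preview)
Your proposal is correct and follows essentially the same route as the paper: both arguments hinge on iteratively applying Fiedler's inertia Lemma~\ref{5.7} to remove the tree-like zeros in $\mathcal{F}^c$ one at a time---using the restriction of $f_k$ to a component (available because the removed vertex has $f_k=0$) as the kernel vector and the guaranteed non-zero neighbor to ensure $a^T u\neq 0$---and then feeding the resulting inertia count into Lemma~\ref{5.10} and Lemma~\ref{3.5}. The only cosmetic difference is that the paper packages the final step as a reduction to Lemma~\ref{5.11} applied to the principal submatrix $N=L|_\Omega$ (translating the inertia identity into $k+r=k'+r'$), whereas you apply Lemma~\ref{5.10} directly to $\tilde{B}$; your explicit symmetrization via $\mathcal{L}=\Delta^{1/2}L\Delta^{-1/2}$ is a point the paper glosses over.
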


{\bf Remark } Due to Proposition \ref{5.3}, we have in the above that $l(H^{\prime})=l(H)+\sum_{y\in Y} d_y$.

\begin{proof}
We denote the vertex set of $H$ by $V=\{1, 2, \cdots, n\}$. Let us denote by $N$ the symmetric matrix obtained from $L$ by deleting all rows and columns with indices from $\mathcal{F}^c$. Then we claim that
\begin{equation*}
  p_{(L-\lambda_k I)}=p_{(N-\lambda_k I)}+|\mathcal{F}^c|,
\end{equation*}
where $p_{(L-\lambda_k I)}$ and $p_{(N-\lambda_k I)}$ are the positive indices of inertia of $L-\lambda_k I$ and $N-\lambda_k I$, respectively. For ease of notation, we do not distinguish $I_n$ and $I_n-|\mathcal{F}^c|$.

We prove this claim by induction with respect to the number $|\mathcal{F}^c|$. When $|\mathcal{F}^c|=0$, the claim holds true. Next, we assume the claim is true when $|\mathcal{F}^c|=m-1$. We consider the case that $|\mathcal{F}^c|=m$. Without loss of generality, we assume $n\in \mathcal{F}^c$ and the matrix $L-\lambda_k I$ has the following form:
\[
L-\lambda_k I=\left(
                  \begin{array}{cc}
                    L_0 & \eta \\
                    \eta^{T} & L_{nn}-\lambda_k \\
                  \end{array}
                \right)
\]
with $\eta^{T}=\{ L_{n1}, \cdots, L_{n, n-1} \}$ and
\[
L_0=\left(
      \begin{array}{cccc}
        L_1-\lambda_k I_{n_1} & 0 & \cdots & 0 \\
        0 & L_2-\lambda_k I_{n_2} & \cdots & 0 \\
        \vdots & \vdots & \ddots & \vdots \\
        0 & 0 & \cdots & L_h-\lambda_k I_{n_h} \\
      \end{array}
    \right)
\]
where $h=d_n$ is the degree of $n$ and $L_i$ is an $n_i\times n_i$ symmetric matrix for each $i$.

Since $n\in \mathcal{F}^c$, there exists an index $j$ such that $L_{nj}f_k(j)\neq 0$. Without loss of generality, we assume $j\in \{1, 2, \cdots, n_1\}$. We set
\begin{equation*}
  u := (f_k(1), \cdots, f_k(n_1), 0, \cdots, 0)^{T}\in \mathbb{R}^{n-1}.
\end{equation*}
Since the vertex $n$ is tree-like, we have $n$ and $s$ are not in same edge, for any $s\in \{1, 2, \cdots, n_1\} \setminus \{j\}$. Therefore, we derive
\begin{equation*}
  \eta^{T}u=\sum^{n_1}_{s=1} L_{ns} f_k{s}= L_{nj} f_k{j} \neq 0.
\end{equation*}
Moreover, we have $L_0 u=0$. Then we can apply Lemma \ref{5.7} to conclude that
\begin{equation*}
  p_{(L-\lambda_k I)}= p_{L_0}+1.
\end{equation*}
By our induction assumption, we have $p_{(L-\lambda_k I)}= p_{L_0}+1=p_{(L-\lambda_k I)}+|\mathcal{F}^c|$. That is, we prove the claim (22).

Let $\mu_1 \leq \cdots\leq \mu_{n-|\mathcal{F}^c|}$ be the eigenvalues of $N$. We assume
\begin{equation*}
  \mu_{k^{\prime}-1} <\lambda_k=\mu_{k^{\prime}}=\cdots=\mu_{k^{\prime}+r^{\prime}-1} < \mu_{k^{\prime}+r^{\prime}}.
\end{equation*}
We observe that $p_{(L-\lambda_k I)}=n-(k+r-1)$, and $p_{(N-\lambda_k I)}=n-|\mathcal{F}^c|-(k^{\prime}+r^{\prime}-1)$. Then (22) implies
\begin{equation}\label{}
  k+r=k^{\prime}+r^{\prime}.
\end{equation}
Note that $H^{\prime}$ is the induced subhypergraph of $N$. By definition, we have $\mathfrak{S}(f_k|H^{\prime})=\mathfrak{S}(f_k)$ since the set of nonzeros stays put. Applying Lemma \ref{5.11} and (23) leads to
\begin{eqnarray*}
  k+r-1&=&k^{\prime}+r^{\prime}-1 \geq \mathfrak{S}(f_k) = \mathfrak{S}(f_k|H^{\prime}) \\
  &\geq& k^{\prime}+r^{\prime}-1-l^{\prime}+l_{+} = k+r-1-l^{\prime}+l_{+}> k+r-1-l^{\prime}+l_{+}.
\end{eqnarray*}
This completes the proof.
\end{proof}

\begin{proof}(Proof of Theorem \ref{5.6}:)
Restrict the function $f_k$ to the induced subhypergraph $\tilde{H}$ of $H$ on $V \setminus \mathcal{F}$. Then $f_k$ is still an eigenfunction of $L|\tilde{H}$ restricting to $\tilde{H}$ corresponding to the eigenvalue $\lambda_k$. We denote by $\mu_1 \leq \cdots\leq \mu_{n-\mathcal{F}}$ the eigenvalues of $L|\tilde{H}$. We assume
\begin{equation*}
  \mu_{\tilde{k}-1} <\lambda_k=\mu_{\tilde{k}}=\cdots=\mu_{\tilde{k}+\tilde{r}-1} < \mu_{\tilde{k}+\tilde{r}}.
\end{equation*}
Observing that all zeros of $f_k|\tilde{H}$ lie in $\mathcal{F}^c(\tilde{H}, f_k|\tilde{H})$, we obtain by Lemma \ref{5.12}
\begin{equation}\label{}
  \mathfrak{S}(f_k)=\mathfrak{S}(f_k|H^{\prime}) \geq \tilde{k}+\tilde{r}-1-l^{\prime}+l_{+}.
\end{equation}
where $l^{\prime}=l(H^{\prime})$. Recall that $H^{\prime}$ is the induced subhypergraph of $H$ on the set of nonzeros.

Applying the interlacing result Lemma \ref{5.9}, we have
\begin{equation*}
  \lambda_{\tilde{k}+\tilde{r}+|\mathcal{F}|} \geq \mu_{\tilde{k}+\tilde{r}} > \mu_{\tilde{k}+\tilde{r}-1}= \lambda_k = \lambda_{k+r-1}.
\end{equation*}
This implies that
\begin{equation}\label{}
  \tilde{k}+\tilde{r}+|\mathcal{F}| \geq k+r.
\end{equation}
Inserting (25) into (24) yields
\[\mathfrak{S}(f_k)\geq k+r-|\mathcal{F}|-1-l^{\prime}+l_{+}.\]
This completes the proof.
\end{proof}

Finally, we give an example to illustrate the lower bound of Theorem \ref{5.6} is sharp.

{\bf Example 2} We consider the signed hypergraph $\Gamma=(H, \sigma)$ given in Figure 1. By numerical computation, we obtain the eigenvalues of $L$ listed below:
\begin{eqnarray*}
  \lambda_1\approx -0.51 \leq \lambda_2\approx 0.22 \leq \lambda_3\approx 0.33 \leq \lambda_4=\lambda_5=\lambda_6=1 \leq \lambda_7\approx 1.95 \leq \lambda_8=\lambda_9=2
\end{eqnarray*}

The smallest eigenvalue $\lambda_1\approx -0.51$ of $L$ is simple and its eigenfunction is
\begin{eqnarray*}
f_1\approx(-0.38, -0.50, -0.38, -0.38, -0.2, -0.38, 0.25, 0.13, 0.25)^{T}.
\end{eqnarray*}
It is direct to figure out $\mathfrak{S}(f_1)=1, l=\sum_{e\in E(H)}(|e|-1)-|V(H)|+c(H)=3, l_{+}=\sum_{e\in E(H_1)}(|e|-1)-|V(H_1)|+c(H_1)=3$. Theorem \ref{5.6} tells $\mathfrak{S}(f_1)\geq 1$, which is sharp.

\section*{Acknowledgements}
This work was supported by the National Natural Science Foundation of China (No. 11971164) and the Qinghai Natural Science Foundation of China (No. 2022-ZJ-973Q).

\vskip 3mm
\end{spacing}
\end{document}